\documentclass{article}
\usepackage[utf8]{inputenc}
\usepackage{fullpage}

\usepackage{amsmath,amsthm,amssymb,amsfonts}
\usepackage{mathtools}
\usepackage{tikz}
\usepackage{tikz-cd}
\usepackage{ifthen}
\usepackage[normalem]{ulem}
\usepackage{lipsum}
\usepackage{authblk}

\usepackage{hyperref}

\newtheorem{theorem}{Theorem}
\newtheorem{definition}{Definition}

\newtheorem{remark}{Remark}
\newtheorem{lemma}{Lemma}
\newtheorem{example}{Example}

\newtheorem{corollary}{Corollary}
\theoremstyle{definition}

\title{Computing 1-Periodic Persistent Homology with Finite Windows}

\author{Adam Onus\footnote{\href{a.onus@qmul.ac.uk}{a.onus@qmul.ac.uk}} }
\author{Primoz Skraba\footnote{\href{p.skraba@qmul.ac.uk}{p.skraba@qmul.ac.uk}}}
\affil{School of Mathematical Sciences\\ Queen Mary University of London\\ London, United Kingdom}

\date{}

\usepackage{xcolor}
\usetikzlibrary{shapes}

\newcommand{\D}{\partial}
\newcommand{\Z}{\mathbb{Z}}
\newcommand{\F}{\mathbb{F}}
\newcommand{\R}{\mathbb{R}}

\newcommand{\U}{\mathcal{U}}
\newcommand{\N}{\mathbb{N}}
\newcommand{\tr}{\mathbf{t}}
\newcommand{\vect}{\textbf{vect}}
\newcommand{\aut}{\mathrm{Aut}}
\newcommand{\rank}{\mathrm{rank}}
\newcommand{\im}{\mathrm{im}}

\newcommand{\surj}{\twoheadrightarrow}
\newcommand{\inj}{\hookrightarrow}
\newcommand{\jarrow}{\xrightarrow{j}}
\newcommand{\iarrow}{\xleftarrow{i}}
\newcommand{\gker}{\mathrm{gker}}
\newcommand{\gim}{\mathrm{gim}}

\newcommand{\torus}{\mathbb{T}}
\newcommand{\cS}{\mathbb{S}}
\newcommand{\Hg}{\mathrm{H}}
\newcommand{\Ig}{\mathrm{I}}

\newcommand{\nerve}{\mathcal{N}}
\newtheorem{result}{Result}

\begin{document}

\maketitle

\begin{abstract}
Periodic spaces often arise in practice in simulations and models where boundary effects must be avoided, as well as in naturally periodic settings such as in crystallography. Constructing topological invariants for such spaces which are computable from finite windows is a surprisingly subtle problem -- and one for which we provide insight into in the case of a 1-dimensional periodicity.  
More specifically we study periodic cell complexes, that is a complex $K$ endowed with a covering $q:K\to G$ where $G$ is a finite quotient space of equivalence classes under translations acting on $K$. When $G$ is embedded in a space whose homotopy type is a $d$-torus for some $d$, this  introduces what we call ``toroidal cycles'' in $G$ which do not lift to cycles in $K$ by $q$. Understanding these cycles is critical for understanding the behavior of the periodic space. In this paper, we study both toroidal and non-toroidal cycles for the case $K$ is 1-periodic, i.e. $G=K/\Z$ for some free action of $\Z$ on $K$.
We show that toroidal cycles can be entirely classified by endomorphisms on the homology of unit cells of $K$, and moreover that toroidal cycles have a sense of unimodality when studying the persistent homology of $G$. 
\end{abstract}

\section{Introduction}

Spatially periodic topological spaces arise when simulating or modelling large (possibly infinite) structures on comparably small finite domains without introducing irregular boundary effects.
The first periodic structure one naturally considers arises from the $d$-dimensional integer lattice, i.e. $\mathbb{Z}^d$.
One can then study the resulting structure by embedding it into a flat $d$-dimensional torus, which equivalently represents a single unit cell of translations on $\R^d$ with periodic boundary conditions. See Figure~\ref{fig:intro-fig} for an example where $d=1$.

\begin{figure}[ht]
\centering
\begin{tikzpicture}[x=1cm]

\node (ldots) at (-13.3,1) {$\cdots$};
\node (rdots) at (-3.9,1) {$\cdots$};
\node (project) at (-2,1) {\Huge $\surj$};

\draw (-14+1.24,0.98) to[bend right=60] (-14+1.6,0.93);
\draw (-14+1.24,0.98) to (-14+1.6,0.93);
\draw (-14+1.6,0.93) to[bend left=30] (-14+0.78,0.78);

\foreach \i in {-6,...,-3}
    {
    \draw (2*\i+0.7,1.6) to[bend left=40] (2*\i+1.1,1.36);
    \draw (2*\i+0.99,1.13) to (2*\i+1.1,1.36);
    \draw (2*\i+0.99,1.13) to (2*\i+0.92,0.99);
    \draw (2*\i,1.34) to[bend left=30] (2*\i-0.14,1.08);
    \draw (2*\i+0.71,1.13) to[bend left=20] (2*\i+0.5,1.25);
    \draw (2*\i+0.3,1.2) to[bend left=90] (2*\i+0.15,1);
    \draw (2*\i+0.15,1) to[bend right=90] (2*\i,0.8);
    \draw (2*\i+1.24,0.98) to[bend right=60] (2*\i+1.6,0.93);
    \draw (2*\i+1.24,0.98) to (2*\i+1.6,0.93);
    \draw (2*\i+0.3,1.2) to (2*\i+0.3,2);
    \draw (2*\i+1.6,0.93) to[bend left=30] (2*\i+0.78,0.78);
    \draw (2*\i+1.6,0.93) to (2*\i+1.54,0.67);
    \draw (2*\i+1.7,0.6) to[bend right=30] (2*\i+1.54,0.67);
    
    \draw[fill=gray!30] (2*\i+0.6,0.2) circle (0.15);
    \draw (2*\i+0.3,1.6) circle (0.4);
    \draw[fill=gray!30] (2*\i,0.6) ellipse (0.3 and 0.2);
    \draw[xshift=2*\i cm,rotate=20,fill=gray!30] (1.5,0.7) ellipse (0.05 and 0.2);
    \draw[xshift=2*\i cm,rotate=-30,fill=gray!30] (1,1) ellipse (0.25 and 0.35);
    \draw[xshift=2*\i cm,rotate=-45,fill=gray!30] (-0.3,1.3) rectangle (-0.1,1.5);
    \draw[xshift=2*\i cm,rotate=-45,fill=gray!30] (-0.3,1.3) -- (-0.1,1.3) -- (-0.15,1.2) -- (-0.3,1.3);
    \draw[xshift=2*\i cm,rotate=-45,fill=gray!30] (-0.1,1.3) -- (-0.15,1.2) -- (0,1.1) -- (-0.05,1.35) -- (-0.1,1.3);
    }
\foreach \i in {-7,...,-4}
    {
    \draw[xshift=2*\i cm,rotate=30,fill=gray!30] (2,0) ellipse (0.15 and 0.1);
    \draw (2*\i+1.6,0.93) to[bend right=20] (2*\i+1.7,0.6);
    }

\draw[->,blue,line width=0.25mm] (-10,2.3) -- (-8,2.3);

\draw[blue,dashed,line width=0.2mm] (0,-0.1) -- (0,2.1);
\draw[blue,dashed,line width=0.2mm] (2,-0.1) -- (2,2.1);
\draw[<->,blue,line width=0.35mm] (2,1) arc [x radius = 1.52, y radius = 0.8, start angle = -48.59, end angle = 228.59];

\draw (0.7,1.6) to[bend left=40] (1.1,1.36);
\draw (0.99,1.13) to (1.1,1.36);
\draw (0.99,1.13) to (0.92,0.99);
\draw (2,1.34) to[bend left=30] (1.86,1.08);
\draw (0.71,1.13) to[bend left=20] (0.5,1.25);
\draw (0.3,1.2) to[bend left=90] (0.15,1);
\draw (0.15,1) to[bend right=90] (0,0.8);
\draw (1.24,0.98) to[bend right=60] (1.6,0.93);
\draw (1.24,0.98) to (1.6,0.93);
\draw (0.3,1.2) to (0.3,2);
\draw (1.6,0.93) to[bend right=20] (1.7,0.6);
\draw (1.6,0.93) to[bend left=30] (0.78,0.78);
\draw (1.6,0.93) to (1.54,0.67);
\draw (1.7,0.6) to[bend right=30] (1.54,0.67);

\draw[fill=gray!30] (0.6,0.2) circle (0.15);
\draw (0, 1.34) arc (-138.59:138.59:0.4);
\draw (2, 1.86) arc (138.59:221.41:0.4);
\draw[fill=gray!30] (0,0.4) arc [x radius = 0.3, y radius = 0.2, start angle = -90, end angle = 90];
\draw[fill=gray!30] (2,0.8) arc [x radius = 0.3, y radius = 0.2,
  start angle = 90, end angle = 270];
\draw[rotate=20,fill=gray!30] (1.5,0.7) ellipse (0.05 and 0.2);
\draw[rotate=30,fill=gray!30] (2,0) ellipse (0.15 and 0.1);
\draw[rotate=-30,fill=gray!30] (1,1) ellipse (0.25 and 0.35);
\draw[rotate=-45,fill=gray!30] (-0.3,1.3) rectangle (-0.1,1.5);
\draw[rotate=-45,fill=gray!30] (-0.3,1.3) -- (-0.1,1.3) -- (-0.15,1.2) -- (-0.3,1.3);
\draw[rotate=-45,fill=gray!30] (-0.1,1.3) -- (-0.15,1.2) -- (0,1.1) -- (-0.05,1.35) -- (-0.1,1.3);

\end{tikzpicture}
\caption{Left: a 2-dimensional, 1-periodic cellular complex $K$ in $\R^2$ with translation group $T$ generated by shifts in the direction and magnitude of the thick blue arrow. Right: $K$ is generated by translated copies of the unit cell between the two dashed lines. By gluing the dashed lines together, one obtains the quotient space $G=K/T$, which is a subspace of $\cS^1\times\R$.}
\label{fig:intro-fig}
\end{figure}
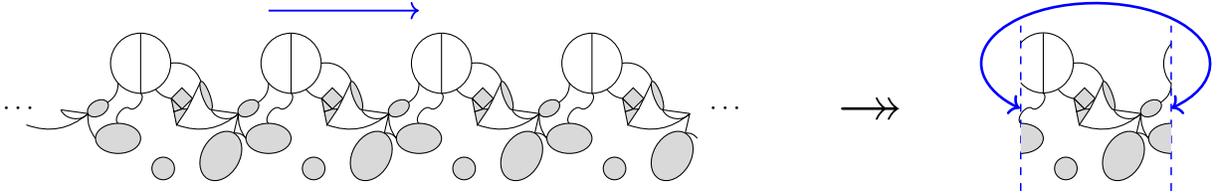

In this paper, we study 1-periodic cellular complexes.
A $d$-periodic cellular complex $K\subseteq \R^n$ is one which is endowed with a translation group, $T\leq \mathrm{Aut}(K)$, of symmetries in $d$ independent directions.
$T$ represents a free action of $\Z^d$ on $K$.
We further define the quotient space $G=K/T$ which is equipped with a natural covering $q:K\to G$.
The group $T$ extends to a group of symmetries on $\R^n$.
Identifying $T\cong\Z^d$ and choosing an appropriate basis for $\R^n$, the corresponding quotient space is $\R^n/T=\torus^d\times\R^{n-d}$ where $\torus^d=(\R/\Z)^d$ is the $d$-dimensional torus.
Thus, for fixed $K$ and $T$ we have the following canonical commutative diagram.

\[
\begin{tikzcd}
         K & {\R^n} & \\
         & & {\torus^d} \\
         G & {\torus^d\times\R^{n-d}} & \\
        \arrow[hook, from=1-1, to=1-2]
        \arrow[hook, dashed, from=3-1, to=3-2]
        \arrow[two heads, "q", from=1-1, to=3-1]
        \arrow[two heads, "q", from=1-2, to=3-2]
        \arrow[two heads, from=3-2, to=2-3]
        \arrow[dashed, from=1-2, to=2-3]
\end{tikzcd}
\]
$K$ may or may not have a filtration, but is often defined over a point cloud $Q$ in application.
$Q$ may represent atoms in a crystal model or molecular dynamics simulation, or more generally any large spatially homogeneous structure.
For example, $Q$ may instead represent pixels or voxels in the digital image analysis of a large porous material with many cavities.

For fixed $K$, neither $T$ nor $q:K\to G$ will be unique, and there is no a priori ``best'' choice which fits all scenarios.
Therefore, we are led to several natural questions: are there invariants of $G$ which do not depend on $T$ for a fixed $K$?
If so, what information about $K$ can we tell from $G$?
What additional information do we need to give $G$ in order to recover all information of $K$ on a computer (i.e. with a finite algorithm)?
If $K$ is filtered, do we have a well-defined notion of persistent homology, and is this theory independent of the choice of $T$ or unit cell?
In this paper, we begin addressing these questions. 

The question of classification has been well-studied in the realm of topological crystallography for the case of periodic graphs \cite{delgado2005three,sunada2012topological,eon2018planar}.
This includes recent work by \cite{edelsbrunner2021density} which has been implemented to find previously unknown degeneracies in the classification of crystal structures.
The question of retrieving the (persistent) homology and topology of $K$ is, conversely, not very well studied and is surprisingly subtle and non-trivial.
Persistent homology is fundamentally not well-defined due to the lack of q-tame-ness of the persistent homology of $K$ \cite{chazal2016structure}, and non-uniqueness of $G$.
As of yet, there has been no successful alternative definition of persistent homology, with most issues stemming from the lack of uniqueness in choosing $T$ and $q$.
Recent work to address this introduced a degree 0 persistent homology theory of periodic point clouds through an extension of merge trees \cite{edelsbrunner2024merge}. Their method employs similar concepts to \cite{onus2022quantifying} to track the asymptotic number of connected components of finite approximations of the point cloud. This approach is independent of the choice of $G$ (respectively $T$), is stable under perturbations and can be computed in polynomial time, however there is no obvious way that this method extends to persistent homology in arbitrary degree.

On the other hand, it is well documented in topological crystallographic literature that different crystal lattices can produce the same quotient graphs \cite{sunada2012topological}.
In \cite{onus2022quantifying}, it is shown that homological features of $K$ may easily be lost in $G$, but also that ``toroidal cycles'' may appear.
Toroidal cycles are phantom homological features of $K$ that appear only in a quotient space $G$.
In other words, toroidal cycles are those cycles representing classes in $\Hg_\bullet(G)$ which are not in the image of the map induced by $q:K\surj G$.
For the case of periodic graphs, in \cite{onus2022quantifying} 
it is shown that one can recover the homology of $K$ by adding appropriate vector weights to the edges of $G$.
For higher dimensions the problem is decidedly harder, with \cite{onus2022quantifying} only providing a heuristic for classifying toroidal cycles relying on spectral sequences.

When the underlying lattice is 1-dimensional, one has a canonical circle-valued map as outlined by the previous commuting diagram, i.e. $K\to\cS^1$ (where $\cS^1=\torus^1$), which is the object of study in Morse-Novikov theory \cite{novikov1991quasiperiodic}.
This is a classical area of algebraic topology where it is known that one can entirely classify the homology of $K$ in this theory via Novikov homology arising from embeddings of the monodromy groups.
These have also been connected to persistent homology, where the indecomposables of the homology groups arising from these maps have been studied \cite{burghelea2013topological,burghelea2017topology}. While clearly related, this theory is not readily adaptable to inferring the homology of all subsets of a periodic space from a small finite subset, i.e. one ``copy" of the space that is repeated.

In a similar context, sheaf-theoretic methods have been used to study indecomposables of cyclic quivers in terms of lifts to a universal cover \cite{fersztand2024harder}.
While our construction in this paper yields a special case of these modules where the cyclic quiver representation may be identified as zig-zag persistence modules, the approach we propose is more computationally efficient (as it is tailored to our setting).
Finally, since $T$ is a free action on $K$, one observes that the cohomology of $G$, $\Hg^\bullet(G)$, is isomorphic to the equivariant cohomology of $K$ \cite{tu2020introductory}. 
Recent work by \cite{adams2024persistent} has also extended this to a persistent equivariant cohomology theory.
Incorporating these techniques will likely be required to generalise our results to spaces with more general group actions, however this theory is otherwise unrelated to our construction.

\paragraph*{Contributions}
Here, we build on the work of \cite{onus2022quantifying} in an algebraic setting in order to study the (persistent) homology of 1-periodic cellular complexes.
In particular, we provide two main results for the classification of toroidal and non-toroidal cycles of finite quotient spaces.
 To do this, we study $K$ (or $G$) locally via maps on the homology of small finite subcomplexes of $K$ induced by inclusion.
We will see that these maps extend to form a zig-zag persistence module, and we can leverage the representation theory to define finite-dimensional endomorphisms which can identify toroidal cycles in  $O(N^\omega\log^2 N)$ time, in which $N$ is the number of cells in a fixed size window, i.e. one copy of the periodic complex, and $\omega$ is the exponent for matrix multiplication. 

Our first result (Theorems~\ref{thm:isomorphisms} and~\ref{thm:recover}) characterizes homology classes which arise due to taking the quotient (but may not exist in all finite windows), i.e. toroidal cycles. We show that for these  cycles,  which appear due to equivalence relations of the ambient torus, 
it is always possible to represent them with a finite window for 1-periodic cell complexes which allows us to find all such cycles efficiently.
\begin{result}
The toroidal cycles of $K$ can be identified with a subspace of the cycles in a unit cell $U$ (respectively, the intersection of adjacent unit cells $V$) of $K$.
Moreover, this subspace is precisely the generalized image of a canonical endomorphism on $\Hg_\bullet(U)$ (on $\Hg_\bullet(V)$).
\end{result}

Our second result (Theorem~\ref{thm:unimodal}) shows there is some partial control over the appearance of toroidal cycles when we take persistence of finite representations. 
\begin{result}
Toroidal cycles are unimodal in the sense that if their local representative at time $i$ represents a non-toroidal cycle at time $j$ for $j>i$ then it must represent a non-toroidal cycle for all $k\geq j$.
\end{result}

\section{Preliminaries}

We first introduce basic notation and definitions required for periodic cellular complexes in an algebraic setting.
By a \textit{cellular complex} we implicitly mean a CW complex \cite{hatcher}, although  the reader may replace this with simplicial or cubical complexes if desired.
For the following definition, we recall that a group $G$ with identity $e$ acts \textit{freely} on a set $X$ if for any $g\in G$ and $x\in X$, $g\cdot x = x$ implies $g=e$. In other words, if a non-trivial group element acts on $X$ then no point of $X$ is fixed.

\begin{definition}
\normalfont
A cellular complex $K$ is $d$-periodic if it is endowed with a free action of $\Z^d$ on $K$ by cellular maps.
\end{definition}
When $d>0$, $K$ will necessarily be infinite, although in this paper we assume in addition that $K$ is locally compact and paracompact.
We denote the group of actions of $\Z^d$ on $K$ by $T\leq \aut(K)$. Further, for $n\geq 1$, let $G_n=K/\sim_n$ denote the quotient space with respect to the equivalence relation $x\sim_n y$ if and only if $\mathbf{t}^n(x)=y$ for some $\mathbf{t}\in T$.
Let $q_n:K\surj G_n$ be the corresponding quotient map.
Note that the free action on a $d$-periodic cellular complex need not be maximal or obey any other additional criteria, so there may be many actions (and many corresponding $T$) which exist for a fixed $K$. We 
will primarily be interested in homology. We will write $\Hg_i(X)$ to mean the $i^\mathrm{th}$ homology group of $X$ and $\Hg_\bullet(X)=\bigoplus_i \Hg_i(X)$.  We always take homology with coefficients in an arbitrary but fixed field $\F$ unless otherwise stated.

\begin{definition} \label{def:toroidal}
\normalfont
A \textit{non-toroidal cycle} in $G_n$ is a cycle representing a class of $\Ig^n:=(q_n)_*(\Hg_\bullet(K))$.
All other cycles in $G_n$ are \textit{toroidal}.
\end{definition}
This definition precisely characterizes the toroidal and non-toroidal cycles -- however, computationally, this definition is not practical as $\Hg_\bullet(K)$ is, in any interesting case, an infinite dimensional object. In Section \ref{sec:toroidal}, we provide an equivalent characterization which does not require an infinite-dimensional object to find the toroidal (and non-toroidal) cycles.

\begin{example} \label{ex:toroidal}
\normalfont
Let $K\subset \R^2$ be the following 1-periodic graph with vertex set $\Z\times\{-1,0,1\}$. We will use this periodic graph as a running example throughout this paper.
\[
\begin{tikzpicture}[scale=0.9]
\node (ldots) at (-8,2) {$\cdots$};
\node (rdots) at (6,2) {$\cdots$};
\foreach \i in {-7,...,5}
    {
    \foreach \j in {1,...,3}
    	{
    	\node[shape=circle,fill=black,inner sep=0pt,minimum size=0.15cm] (\i,\j) at (\i,\j) { };
    	}
    }
\foreach \i in {-7,...,5}
    {
    \draw (\i-0.5,3) to (\i+0.5,3);
    \draw (\i,3) to (\i+0.5,2.5);
    \draw (\i-0.5,2.5) to (\i,2);
    \draw (\i,2) to (\i+0.5,1.5);
    \draw (\i-0.5,1.5) to (\i,1);
    \draw[gray] (\i,1) to (\i+0.5,2);
    \draw[gray] (\i-0.5,2) to (\i,3);
    }
\end{tikzpicture}
\]
The horizontal integer distance translations $T\cong\Z$ make $K$ 1-periodic.
$K$ naturally covers $G_3$ as shown below.
The green edges indicate an example of a \textit{non-toroidal} 1-cycle of $G_3$ which lifts to a cycle in $K$, whereas the red edges indicate an example of a \textit{toroidal} 1-cycle of $G_3$ which does not lift to a cycle in $K$.
\[
\begin{tikzpicture}[scale=0.95]
\node (K) at (-5.5,1.5) {$K$};
\node (K) at (0,1.5) {$G_3$};
\node (ldots) at (-8,0) {$\cdots$};
\node (rdots) at (-3,0) {$\cdots$};
\node (imply1) at (-2,0) {\Huge $\surj$};
\draw (-7.5,1) to (-3.5,1);
\foreach \i in {-7,...,-4}
    {
    \draw[gray] (\i,-1) to (\i+0.5,0);
    \draw[gray] (\i-0.5,0) to (\i,1);
    \draw (\i,1) to (\i+0.5,0.5);
    \draw (\i-0.5,0.5) to (\i,0);
    \draw (\i,0) to (\i+0.5,-0.5);
    \draw (\i-0.5,-0.5) to (\i,-1);
    \foreach \j/\s in {1/star,2/diamond,3/regular polygon}
    	{
    	\node[shape=\s,fill=black,inner sep=0pt,minimum size=0.2cm] (\i,\j) at (\i,\j-2) { };
    	}
    }
\foreach \i/\j/\k/\l/\m in {-7/1/0/-1/1,-6/0/-1/1/0,-5/-1/1/0/-1}
    {
    \draw[>=latex,red,line width=0.4mm,->] (\i,\l) to (\i+1,\m);
    \draw[>=latex,green,line width=0.4mm,->] (\i,1) to (\i+1,1);
    \draw[>=latex,green,line width=0.4mm,->] (\i+1,\k) to (\i,\j);
    }
\foreach \i in {1,...,3}
    {
    \foreach \j/\s in {1/star,2/diamond,3/regular polygon}
        {
        \node[shape=\s,fill=black,inner sep=0pt,minimum size=0.15cm] (\i:\j) at (120*\i:0.3333+0.3333*\j) { };
        }
    }
    
\draw (120:0.9999) to[bend left = -60] (240:0.6666);
\draw[gray] (240:0.6666) to[bend left = -60] (0:1.3332);
\draw (0:1.3332) to[bend left = -60] (120:0.9999);
\foreach \i/\j/\k/\l/\m in {1/2/1/0/2,2/1/0/2/1,3/0/2/1/0}
    {
    \draw[>=latex,red,line width=0.4mm,->] (120*\i:0.6666+0.3333*\l) to[bend left = -55] (120+120*\i:0.6666+0.3333*\m);
    \draw[>=latex,green,line width=0.4mm,->] (120*\i:1.3332) to[bend left = -60] (120+120*\i:1.3332);
    \draw[>=latex,green,line width=0.4mm,->] (120+120*\i:0.6666+0.3333*\k) to[bend left = 55] (120*\i:0.6666+0.3333*\j);
    }
\end{tikzpicture}
\]
\end{example}

The distinction between toroidal and non-toroidal cycles was introduced in \cite{onus2022quantifying}, and arises since homology classes of $K$ can only be represented by finite linear sums of cells.
This is not a problem in cohomology (unless taking cohomology with compact support) or Borel-Moore homology for example.
For instance, the sum of all (infinitely many) horizontal edges of $K$ in Example~\ref{ex:toroidal} above is \textit{not} an element of $Z_1(K)$, however the sum of $n$ adjacent horizontal edges is an element of $C_1(K)\backslash Z_1(K)$ and projects onto a toroidal 1-cycle in $G_n$.
We are motivated to distinguish between toroidal and non-toroidal cycles, as they are critical in determining finite window size effects and the corresponding implication to $K$.
As another example, a toroidal 2-cycle of a 1-periodic quotient space will lift to a tunnel \textit{through} the infinite space, $K$. 
This type of cavity has different implications to a porous structure than a compact air bubble, for instance, which will always be represented by a non-toroidal cycle.
In order to model spaces with these features, we must be able to distinguish between the two cases.

In Sections~\ref{sec:toroidal} and ~\ref{sec:persistence} we require the notion of a \textit{generalized image} of an endomorphism.
The authors could only find mention of this object in \cite{edelsbrunner2015persistent}, so we give a brief background here of its definition and some basic, yet essential, properties.

\begin{definition} \label{def:gen-imker}
Given a vector space $V$ and an endomorphism $E:V\to V$, the \textit{generalized image} of $E$ is
\[
\gim(E):=\bigcap_{m=1}^\infty\im(E^m),
\]
and similarly, the \textit{generalized kernel} of $E$ is
\[
\gker(E):=\bigcup_{m=1}^\infty\ker(E^m).
\]
\end{definition}

In Theorem~\ref{thm:isomorphisms} and thereafter, we will specifically make use of the following decomposition.
We include the proof of this fact for completeness, as we could not find it in the literature. 
\begin{lemma} \label{lem:gen-imker}
For any finite dimensional vector space $V$ and endomorphism $E:V\to V$, $V$ decomposes as
\[
V = \gker(E)\oplus \gim(E).
\]
\end{lemma}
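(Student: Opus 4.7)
The plan is to reduce to Fitting's lemma by exploiting finite-dimensionality to stabilize the ascending kernel chain and descending image chain simultaneously, then derive the decomposition via rank-nullity and a simple intersection argument.

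First I would observe that the chain $\ker(E) \subseteq \ker(E^2) \subseteq \cdots$ is an ascending chain of subspaces of $V$, so since $\dim V < \infty$ it must stabilize: there exists $m_1$ with $\ker(E^k) = \ker(E^{m_1})$ for all $k \geq m_1$. Similarly $\im(E) \supseteq \im(E^2) \supseteq \cdots$ stabilizes at some $m_2$. Setting $m = \max(m_1, m_2)$, we obtain $\gker(E) = \ker(E^m)$ and $\gim(E) = \im(E^m)$. In particular, both generalized objects are realized as honest kernels/images of a single power $E^m$, and rank-nullity immediately gives $\dim \gker(E) + \dim \gim(E) = \dim V$.

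Next I would show the intersection is trivial. The key observation is that $E$ restricts to an endomorphism of $\gim(E) = \im(E^m)$, and this restriction is surjective: any $v \in \im(E^m)$ lies in $\im(E^{m+1}) = E(\im(E^m))$ by stability. Since $\gim(E)$ is finite-dimensional, surjective implies bijective, so $E$ (and hence every $E^k$) acts as an automorphism on $\gim(E)$. Now if $v \in \gker(E) \cap \gim(E)$, then $E^k(v) = 0$ for some $k$, but $E^k$ is injective on $\gim(E)$, forcing $v = 0$.

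Combining the two steps, $\gker(E) \cap \gim(E) = 0$ together with the dimension count $\dim \gker(E) + \dim \gim(E) = \dim V$ yields the direct sum decomposition $V = \gker(E) \oplus \gim(E)$.

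There is no real obstacle: the only subtle point is justifying that $E$ is surjective (hence bijective) on $\gim(E)$, which is what allows the intersection argument to go through. Everything else is a straightforward application of finite-dimensional linear algebra; the entire argument takes only a few lines once the stabilization index $m$ is introduced.
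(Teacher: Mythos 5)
Your proof is correct and follows essentially the same route as the paper's: both stabilize the kernel and image chains at a single power $E^m$, apply rank-nullity to get the dimension count, and then show the intersection is trivial. The only variation is in the last step, where the paper argues directly (if $x=E^N(y)$ and $E^N(x)=0$ then $E^{2N}(y)=0$, so $y\in\ker(E^N)$ and $x=0$) while you first establish that $E$ restricts to an automorphism of $\gim(E)$ and then invoke injectivity of $E^k$ there; this is a small rephrasing of the same underlying idea and both are standard finishes to Fitting's lemma.
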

\begin{proof}
Since $V$ is finite, it is Noetherian, meaning there exists $N\in\N$ for which $\gker(E)=\ker(E^k)$ and $\gim(E)=\im(E^k)$ for every $k\geq N$.
Moreover, by rank-nullity 
\[
\dim(V) = \rank(E^N) + \dim\,\ker(E^N),
\]
where $\rank(E^N)=\dim\,\im(E^N)$.
Thus, we only  need to show $\im(E^N)\cap\ker(E^N)=0$.

Suppose $x\in \im(E^N)\cap\ker(E^N)$,
then $E^N(x)=0$ and there exists $y$ such that $E^N(y)=x$, which combined means that $E^{2N}(y)=0$.
But then $y\in\gker(E)$ which means $y\in\ker(E^N)$, implying $x=0$.
\end{proof}

\begin{corollary} \label{cor:gimker-complexity}
For any finite dimensional vector space $V$ of dimension $d=\dim V$ and endomorphism $E:V\to V$, $\gim(E)$ and $\gker(E)$ are computable in $O(d^\omega\log^2 d)$ time, where $\omega$ is the exponent of matrix multiplication.
\end{corollary}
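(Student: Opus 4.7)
The plan is to reduce the computation to evaluating a single power of $E$ and then extracting its kernel and image, exploiting the stabilization bound implicit in the proof of Lemma~\ref{lem:gen-imker}.

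First I would sharpen the stabilization statement in that proof. The ascending chain $\ker(E)\subseteq \ker(E^2)\subseteq\cdots$ sits inside $V$, and each strict inclusion increases dimension by at least one, so the chain must stabilize after at most $d$ steps; by rank-nullity the same index $N\leq d$ also stabilizes the descending chain $\im(E)\supseteq\im(E^2)\supseteq\cdots$. Hence $\gker(E)=\ker(E^{d})$ and $\gim(E)=\im(E^{d})$, and it suffices to form the matrix $E^{d}$ and then read off its null space and column space.

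The next step is to bound the cost of computing $E^{d}$. Writing $d$ in binary uses $\lceil\log_2 d\rceil$ bits, so the standard repeated squaring algorithm computes $E^{d}$ using $O(\log d)$ matrix multiplications. Each multiplication of $d\times d$ matrices costs $O(d^\omega)$ field operations, giving a total of $O(d^\omega \log d)$ to assemble $E^{d}$. Extracting $\ker(E^{d})$ and $\im(E^{d})$ from this matrix is a single Gaussian elimination, which costs $O(d^\omega)$ and therefore does not affect the overall complexity, which falls within the stated $O(d^\omega \log^2 d)$ bound.

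There is no genuine obstacle here beyond citing the bound $N\leq d$, which is immediate from the dimension argument above and was the only quantitative piece missing from the proof of Lemma~\ref{lem:gen-imker}. The slight slack between the natural $O(d^\omega \log d)$ estimate and the stated $O(d^\omega \log^2 d)$ can absorb any implementation overhead (for instance, if one instead locates $N$ by binary search, doing $O(\log d)$ stabilization checks each involving a repeated-squaring computation of cost $O(d^\omega \log d)$).
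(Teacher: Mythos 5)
Your proof is correct and follows essentially the same route as the paper: bound the stabilization index by $N\leq d$ via the strictly increasing kernel chain, compute $E^d$ by repeated squaring, and extract $\ker(E^d)$ and $\im(E^d)$ in matrix-multiplication time. You implement repeated squaring in the standard way (reusing the intermediate squares), which yields the sharper bound $O(d^\omega\log d)$; the paper's exposition recomputes each $E^{2^j}$ from scratch, summing $\sum_j \lambda_j\,O(jd^\omega)$, and so obtains the weaker $O(d^\omega\log^2 d)$ that the corollary actually states. One small imprecision on your side: plain Gaussian elimination is $O(d^3)$, not $O(d^\omega)$; to extract kernel and image within the matrix-multiplication bound one should invoke a fast triangular (e.g.\ LUP) decomposition as the paper does via its citations, though this does not change the final complexity.
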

\begin{proof}
As with the proof of Lemma~\ref{lem:gen-imker} we observe that $\dim(V)=\rank(E^k)+\dim\ker(E^k)$ for every $k\in\N$, and also that there exists a minimal $N$ such that $\gker(E)=\ker(E^k)$ and $\gim(E)=\im(E^k)$ for every $k\geq N$.
Since $\im(E^{k+1})=E^k\left(\im(E)\right)\subseteq \im(E^{k})$, it follows that $\rank(E^{k+1})\leq \rank(E^{k})$ and $\dim\ker(E^k)\leq \dim\ker(E^{k+1})$ for each $k\in\N$. If $\dim\ker(E^k)= \dim\ker(E^{k+1})$ and in turn $\rank(E^{k+1})= \rank(E^{k})$, then $E$ is an automorphism on $\im(E^k)$ and hence $\im(E^k)=\im(E^j)$ for every $j\geq k$. So for $k\leq N$ we must have strict inequalities $\dim\ker(E^k)< \dim\ker(E^{k+1})$ and $\rank(E^{k+1})< \rank(E^{k})$.
In particular, we may bound $N\leq \dim\ker(E^N)\leq d$.
Calculations of images and kernels require matrix multiplication time by first obtaining a triangular decomposition of the matrix (c.f. \cite{bunch1974triangular,milosavljevic2011zigzag}), so the computational complexity will be dominated by at worst calculating $E^{d}$ for $E$ a $d\times d$ matrix.

For $j>0$ an integer, we can construct $E^{2^j}$ by inductively squaring $E^{2^\ell}$ for $\ell\leq j$.
\[
E^{2^{j}} = \left(\cdots\left(\left(\left(E^2\right)^2\right)^2\right)^{\cdots}\right)^2,
\]
which requires exactly $j$ matrix multiplications; a total complexity of $O(jd^\omega)$.
For $d=\sum_{j=0}^{\lfloor \log_2 d\rfloor}\lambda_j2^j$ the binary expansion of $d$ with $\lambda_j\in\{0,1\}$ for each $j$, this means $E^d$ can be computed in
\[
\sum_{j=0}^{\lfloor \log_2 d\rfloor}\lambda_j\,O\left(jd^\omega\right) \leq O\left(d^\omega\log^2 d\right).
\]
\end{proof}

\section{1-periodic complexes} \label{sec:1-period}

We now focus on the case when $K$ is  a 1-periodic cellular complex with respect to translations $T\cong\Z$ generated by some translation $\tr$.
We  introduce an alternative algebraic representation of $K$ from which we can define local endomorphisms which capture the translational symmetries of $K$ (respectively, $G_n$).

Let $U$ to be a subcomplex of $K$ such that $\mathcal{U}=\{\tr^k(U)\,:\,k\in\Z\}$ covers $K$, and let $V:= U\cap \tr(U)$. 
Suppose in addition that $U$ is chosen so that $U\cap \tr^2(U)=\emptyset$, and note that this means $\Hg_\bullet(U)$ and $\Hg_\bullet(V)$ are finite-dimensional since we assume $K$ to be locally compact and paracompact.
Similarly, $\mathcal{U}_n=\{q_n(\tr^k(U))\,:\,k=0,\dots,n-1\}$ will cover $G_n$. Assuming $\Hg_\bullet(U)$ is finite dimensional, 
for $K$ (respectively, $G_n$) this induces an infinite (circular) zigzag persistence module on homologies
\begin{align}
\cdots \iarrow \Hg_\bullet(V) \jarrow \Hg_\bullet(U) \iarrow \Hg_\bullet(V) \jarrow \Hg_\bullet(U) \iarrow \Hg_\bullet(V) \jarrow \cdots
\label{eq:zigzag}
\end{align}
where $i$ is induced by $V\inj U$ and $j$ is induced by the composition of $V\inj \tr(U)$ and the homeomorphism $\tr^{-1}:\tr(U)\to U$.
\begin{example} \label{ex:UV}
\normalfont
Consider $K$ from Example~\ref{ex:toroidal}.
We set $U=K\cap ([0,1]\times [-1,1])$ which in turn makes $V=K\cap (\{1\}\times [-1,1])$.
\[
\begin{tikzpicture}[scale = 0.7]
\node (U) at (-1,2) {$U$};
\node (Ueq) at (-0.5,2) {$=$};
\foreach \i in {0,...,1} {\foreach \j in {1,...,3} {\node[shape=circle,fill=black,inner sep=0pt,minimum size=0.15cm] (\i,\j) at (\i,\j) {};}}
\draw (0,3) to (1,3);
\draw (0,3) to (1,2);
\draw (0,2) to (1,1);
\draw[gray] (0,1) to (1,3);
\node (V) at (4,2) {$V$};
\node (Veq) at (4.5,2) {$=$};
\foreach \i in {5,...,5} {\foreach \j in {1,...,3} {\node[shape=circle,fill=black,inner sep=0pt,minimum size=0.15cm] (\i,\j) at (\i,\j) {};}}
\end{tikzpicture}
\]
The natural inclusions $i:V\inj U$ and $j:V\inj \tr(U)$ generate the following sequence of embeddings
\[
\begin{tikzpicture}[scale = 0.7]
\node (ldots) at (-4,0) {$\cdots$};
\node (rdots) at (15,0) {$\cdots$};
\foreach \i in {-2,3,8,13}
    {
    \node (j,\i-2) at (\i+1,0) {\Large $\inj$};
    \node (i,\i-4) at (\i-1,0) {\Large $\hookleftarrow$};
    \foreach \j in {-1,...,1}
        {
        \node[shape=circle,fill=black,inner sep=0pt,minimum size=0.15cm] (\i,\j) at (\i,\j) {};
        }
    }
\foreach \i in {0,5,10}
    {
    \draw[gray] (\i,-1) to (\i+1,1);
    \draw (\i,1) to (\i+1,1);
    \draw (\i,1) to (\i+1,0);
    \draw (\i,0) to (\i+1,-1);
    \foreach \j in {-1,...,1}
        {
        \node[shape=circle,fill=black,inner sep=0pt,minimum size=0.15cm] (\i,\j) at (\i,\j) {};
        \node[shape=circle,fill=black,inner sep=0pt,minimum size=0.15cm] (\i+1,\j) at (\i+1,\j) {};
        }
    }
\end{tikzpicture}
\]
By taking the 0-dimensional homology in $\F_2$ coefficients (where $\F_2$ is the finite field of 2 elements) with respect to the obvious bases we may write 
\[
\Hg_0(U)\cong \F_2^2\;\;\;\;\;\;\;\Hg_0(V)\cong\F_2^3\;\;\;\;\;\;\;
i \equiv \begin{pmatrix}
1 & 1 & 0 \\
0 & 0 & 1
\end{pmatrix}\;\;\;\;\;\;\; j \equiv \begin{pmatrix}
1 & 0 & 1 \\
0 & 1 & 0
\end{pmatrix}
\]
This induces the following zigzag persistence module from Equation~\ref{eq:zigzag}
\[
\cdots \leftarrow \F_2^3 \xrightarrow{\begin{pmatrix} 1 & 0 & 1 \\ 0 & 1 & 0 \end{pmatrix}} \F_2^2 \xleftarrow{ \begin{pmatrix} 1 & 1 & 0 \\ 0 & 0 & 1 \end{pmatrix}} \F_2^3 \xrightarrow{\begin{pmatrix} 1 & 0 & 1 \\ 0 & 1 & 0 \end{pmatrix}} \F_2^2 \xleftarrow{\begin{pmatrix} 1 & 1 & 0 \\ 0 & 0 & 1 \end{pmatrix}} \F_2^3 \rightarrow \cdots
\label{eq:example}
\]
\end{example}

Suppose we are given $f,g:X\to Y$ in some abelian category.
We may decompose a zigzag diagram akin to (\ref{eq:zigzag}) as follows:
\[
\begin{tikzcd}[column sep={1.3cm,between origins},row sep={1.3cm,between origins}]
         & & & F & &  \\
        \ddots & & Y & & Y &  \\
         & X & & X & & \ddots \\
         & & B & & & 
        \arrow[from=3-2, to=2-1]
        \arrow["f",from=3-2, to=2-3]
        \arrow["g"',from=3-4, to=2-3]
        \arrow["f",from=3-4, to=2-5]
        \arrow[from=3-6, to=2-5]
        \arrow["\pi_1"',from=4-3, to=3-2]
        \arrow["\pi_2",from=4-3, to=3-4]
        \arrow["\rho_1",from=2-3, to=1-4]
        \arrow["\rho_2"',from=2-5, to=1-4]
\end{tikzcd}
\]
Here we define $B$ as the  pullback  of $f$ and $g$. Dually, we define $F$ as the pushout of $f$ and $g$. These are special cases of limits and colimits,  see e.g. \cite[Chapter 2.6]{may1999concise}.
One obtains an explicit representation of these spaces in the category $\vect$ of finite dimensional vector spaces, by setting $B:=\{(x,\tilde{x})\,:\,f(x)=g(\tilde{x})\}\subseteq X\times X$ and $F:=Y\times Y/\Tilde{B}$ where $\tilde{B}=\im\left((f,g):X\to Y\times Y\right)$ such that $[(f(x),g(x)]=0$ in $F$ for every $x\in X$.
Replace $X=\Hg_\bullet(V)$, $Y=\Hg_\bullet(U)$, $f=j$ and $g=i$, and set $B:=B_{i,j}$ and $F:=F_{i,j}$.
The diagram is then:
\[
\begin{tikzcd}[column sep={1.3cm,between origins},row sep={1.3cm,between origins}]
         & & & F_{i,j} & &  \\
        \ddots & & \Hg_\bullet(U) & & \Hg_\bullet(U) &  \\
         & \Hg_\bullet(V) & & \Hg_\bullet(V) & & \ddots \\
         & & B_{i,j} & & & 
        \arrow[from=3-2, to=2-1]
        \arrow["j",from=3-2, to=2-3]
        \arrow["i"',from=3-4, to=2-3]
        \arrow["j",from=3-4, to=2-5]
        \arrow[from=3-6, to=2-5]
        \arrow["\pi_1"',from=4-3, to=3-2]
        \arrow["\pi_2",from=4-3, to=3-4]
        \arrow["\rho_1",from=2-3, to=1-4]
        \arrow["\rho_2"',from=2-5, to=1-4]
\end{tikzcd}
\]
Observe that one can decompose the pullback as
\[
B_{i,j}=\left(\ker(j),0\right)\oplus\left(0,\ker(i)\right)\oplus \mathcal{B},
\]
where $\mathcal{B}\subseteq\Hg_\bullet(V) \times \Hg_\bullet(V)$ is a subspace of $B_{i,j}$ such that for all $(x,y)\in \mathcal{B}$, $x=0$ if and only if $y=0$.
In particular for this representation, $\pi_1$ and $\pi_2$ are projections onto their respective coordinates and $\pi_1|_\mathcal{B},\pi_2|_\mathcal{B}$ are both injective.
Similarly $\Hg_\bullet(V)$ decomposes as $\Hg_\bullet(V)=j^{-1}\left(\im(i)\right)\oplus \mathcal{J}$. 
For $\mathcal{B}$ and $\mathcal{J}$ with this property, we induce an endomorphism on $\Hg_\bullet(V)$.

\begin{lemma} \label{lem:MV}
For fixed choices of $\mathcal{B}$ and $\mathcal{J}$, there exists a unique endomorphism $M_V:\Hg_\bullet(V)\to\Hg_\bullet(V)$ such that for $(x,y)\in\mathcal{B}$ we have $M_V(x)=y$ and $\ker(M_V)=\ker(j)\oplus\mathcal{J}$.
\end{lemma}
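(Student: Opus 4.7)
The plan is to refine the given splitting of $\Hg_\bullet(V)$ into three summands and then define $M_V$ on each summand separately. First I would promote the decomposition $\Hg_\bullet(V) = j^{-1}(\im(i)) \oplus \mathcal{J}$ to the finer one
\[
\Hg_\bullet(V) = \ker(j) \oplus \pi_1(\mathcal{B}) \oplus \mathcal{J}.
\]
The equality $\ker(j) + \pi_1(\mathcal{B}) = j^{-1}(\im(i))$ is immediate since $\pi_1(B_{i,j}) = j^{-1}(\im(i))$ and $B_{i,j} = (\ker(j),0) \oplus (0,\ker(i)) \oplus \mathcal{B}$. Directness follows from showing $\ker(j) \cap \pi_1(\mathcal{B}) = 0$: given $x \in \ker(j)$ with $(x,y) \in \mathcal{B}$, we have $i(y) = j(x) = 0$, so $(x,0) \in (\ker(j),0)$ and $(0,y) \in (0,\ker(i))$ both live in the other two summands of $B_{i,j}$; writing $(x,y) = (x,0) + (0,y)$ and using the direct sum forces the $\mathcal{B}$-component of $(x,y)$ to be zero, and then injectivity of $\pi_1|_\mathcal{B}$ yields $x=0$.

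With the refined splitting in hand, I would define $M_V$ on each summand by
\[
M_V|_{\ker(j)} = 0, \qquad M_V|_{\mathcal{J}} = 0, \qquad M_V|_{\pi_1(\mathcal{B})} = \pi_2 \circ (\pi_1|_\mathcal{B})^{-1},
\]
which is well-defined because $\pi_1|_\mathcal{B}$ is a linear isomorphism onto its image. The condition $M_V(x) = y$ for $(x,y) \in \mathcal{B}$ is then immediate from the formula. For the kernel condition, the inclusion $\ker(j) \oplus \mathcal{J} \subseteq \ker(M_V)$ is clear; for the reverse, suppose $v = v_1 + v_2 + v_3$ with $v_1 \in \ker(j)$, $v_2 \in \pi_1(\mathcal{B})$, $v_3 \in \mathcal{J}$ and $M_V(v)=0$. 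Then $M_V(v_2)=0$, so letting $(x,y) = (\pi_1|_\mathcal{B})^{-1}(v_2)$ we get $y=0$; the defining property of $\mathcal{B}$ (equivalently, injectivity of $\pi_2|_\mathcal{B}$) then forces $x=0$, hence $v_2=0$.

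Uniqueness follows directly: any endomorphism $\phi$ with $\ker(\phi) = \ker(j) \oplus \mathcal{J}$ must vanish on those two summands, while the condition $\phi(x)=y$ for $(x,y)\in\mathcal{B}$ pins down the value on each element of $\pi_1(\mathcal{B})$ via the unique preimage under $\pi_1|_\mathcal{B}$. Since $\ker(j)$, $\pi_1(\mathcal{B})$, and $\mathcal{J}$ together span $\Hg_\bullet(V)$, $\phi$ is fully determined.

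The main obstacle is verifying that $\ker(j) \cap \pi_1(\mathcal{B}) = 0$, since this is the one step where the direct-sum structure of $B_{i,j}$ must be unpacked non-trivially; everything else is routine bookkeeping across the three summands, together with the injectivity of $\pi_1|_\mathcal{B}$ and $\pi_2|_\mathcal{B}$ that was already observed.
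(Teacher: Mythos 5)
Your proof is correct and follows essentially the same route as the paper: both establish the refined decomposition $\Hg_\bullet(V)=\ker(j)\oplus\pi_1(\mathcal{B})\oplus\mathcal{J}$, define $M_V$ to vanish on $\ker(j)\oplus\mathcal{J}$ and equal $\pi_2\circ(\pi_1|_{\mathcal{B}})^{-1}$ on $\pi_1(\mathcal{B})$, and read off uniqueness. The only difference is cosmetic: where you verify $\ker(j)\cap\pi_1(\mathcal{B})=0$ by a direct element chase, the paper gets the directness for free from the observation $\ker(\pi_1)=(0,\ker(i))$, which is one of the three summands of $B_{i,j}$, so $\pi_1$ is injective on the complementary two.
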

\begin{proof}
Observe that $\ker(\pi_1)=(0,\ker(i))$, where
\[
\pi_1(B_{i,j}) = \pi_1\left((\ker(j),0)\oplus(0,\ker(i))\oplus\mathcal{B}\right)=\ker(j)\oplus \pi_1(\mathcal{B}).
\]
Moreover, since $B_{i,j}=\{(x,\tilde{x})\in\Hg_\bullet(V)\times\Hg_\bullet(V)\,:\,j(x)=i(\tilde{x})\}$, we observe when projecting onto the first coordinate that $j^{-1}(\im(i)) = \pi_1(B_{i,j})$.
Thus $\Hg_\bullet(V)=\pi_1(\mathcal{B})\oplus\ker(j)\oplus \mathcal{J}$.
Then for any $x\in\pi_1(\mathcal{B})$ we define $M_V(x)=y$ where $y\in\Hg_\bullet(V)$ is the unique element such that $(x,y)\in\mathcal{B}$ and for $\tilde{x}\in\ker(j)\oplus\mathcal{J}$ we define $M_V(\tilde{x})=0$.
$M_V|_{\pi_1(\mathcal{B})}$ is a well-defined linear map by the vector space structure on $\mathcal{B}$, making it the unique linear map on $\pi_1(\mathcal{B})$ to satisfy the assumptions of the Lemma.
$M_V|_{\ker(j)\oplus\mathcal{J}}=0$ is also well-defined and the unique linear map on $\ker(j)\oplus\mathcal{J}$ to satisfy the assumptions of the Lemma.
Therefore, $M_V$ extends linearly to a unique map on $\Hg_\bullet(V)$ with the desired properties.
\end{proof}

While a fixed choice of $\mathcal{B}$ and $\mathcal{J}$ induces a unique map $M_V$, there is in general no unique $\mathcal{B}$ nor $\mathcal{J}$ with the desired properties.
However, we may canonically choose such subspaces as follows:
$\mathrm{C}_\bullet(U)$ has an inner product structure where for distinct cells $\sigma$ and $\tau$, $\langle \sigma,\tau\rangle = 0$ and $\langle \sigma,\sigma\rangle =1$.
This passes to an inner product structure on both $\Hg_\bullet(U)$ and $\Hg_\bullet(V)$.
Therefore, the notion of orthogonal compliment is well-defined and we may canonically choose $\mathcal{J}=j^{-1}\left(\im(i)\right)^\perp$. Since $\Hg_\bullet(V)=\pi_1(\mathcal{B})\oplus\ker(j)\oplus \mathcal{J}$ and $\pi_1|_{\mathcal{B}}$ is injective, we may also canonically choose $\mathcal{B}$ so that $\pi_1(\mathcal{B})=j^{-1}\left(\im(i)\right)^\perp\cap\ker(j)^\perp$.

Similarly, there is a unique map $\Tilde{M}_V$ which one obtains through the same procedure, only substituting $i$ and $j$.
That is, for $\mathcal{I}=i^{-1}\left(\im(j)\right)^\perp$ and $\mathcal{D}$ such that $\pi_2(\mathcal{D})\subseteq \ker(i)^\perp$, $\ker(\Tilde{M}_V)=\ker(i)\oplus\mathcal{I}$ and $\Tilde{M}_V(y)=x$ for every $(x,y)\in\mathcal{D}$.

Now consider the pullback $\tilde{B}_{i,j}$ of $\Hg_\bullet(U)\xrightarrow{\rho_1} F_{i,j}\xleftarrow{\rho_2} \Hg_\bullet(U)$.
That is,
\[
\begin{tikzcd}[column sep={1.4cm,between origins},row sep={1.4cm,between origins}]
         & F_{i,j} & \\
        \Hg_\bullet(U) & & \Hg_\bullet(U) \\
         & \tilde{B}_{i,j} & \\
         & \Hg_\bullet(V) & \\
        \arrow["\rho_1",from=2-1, to=1-2]
        \arrow["\rho_2"',from=2-3, to=1-2]
        \arrow["\tilde{\pi}_1"',from=3-2, to=2-1]
        \arrow["\tilde{\pi}_2",from=3-2, to=2-3]
        \arrow["i"',from=4-2, to=2-1]
        \arrow["j",from=4-2, to=2-3]
        \arrow["\alpha",from=4-2, to=3-2]
\end{tikzcd}
\]
where $\alpha$ is the unique map satisfying the commutative diagram.
By construction, $\tilde{B}_{i,j}=\{(y,z)\in\Hg_\bullet(U)\times\Hg_\bullet(U)\,:\,\rho_1(y)=\rho_2(z)\}$.
However, $\rho_1(y)=\rho_2(z)$ if and only if $[(y,0)]=[(0,z)]$ in $F_{i,j}$ if and only if there exists $x\in\Hg_\bullet(V)$ such that $y=i(x)$ and $z=j(x)$.
Thus, we identify $\alpha=(i,j):\Hg_\bullet(V)\to \Tilde{B}_{i,j}\subseteq \Hg_\bullet(U)\times \Hg_\bullet(U)$, which is surjective.

As with $B_{i,j}$ we have a decomposition
\[
\tilde{B}_{i,j}=\left(\ker(\rho_1),0\right)\oplus\left(0,\ker(\rho_2)\right)\oplus\tilde{\mathcal{B}},
\]
where $\tilde{\mathcal{B}}$ is a subspace of $\tilde{B}_{i,j}$ such for all $(y,z)\in\tilde{\mathcal{B}}$, $y=0$ if and only if $z=0$.
Observe $\ker(\rho_1)=i(\ker(j))$ and $\ker(\rho_2)=j(\ker(i))$.
Then analogous to the case for $V$, we may construct an endomorphism $M_U$ based on the additional choice of decomposition of $\Hg_\bullet(U)=\im(i)\oplus \tilde{\mathcal{J}}$, where $\im(\tilde{\pi}_1)=\im(i)$.

\begin{lemma}
For fixed choices of $\tilde{\mathcal{B}}$ and $\Tilde{\mathcal{J}}$, there exists a unique endomorphism $M_U:\Hg_\bullet(U)\to\Hg_\bullet(U)$ such that for $(y,z)\in\tilde{\mathcal{B}}$ we have $M_U(y)=z$ and $\ker(M_V)=i(\ker(j))\oplus\tilde{\mathcal{J}}$.
\end{lemma}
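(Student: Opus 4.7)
The plan is to mirror the construction in Lemma~\ref{lem:MV}, replacing the roles $\Hg_\bullet(V), B_{i,j}, \pi_1, \mathcal{B}, \mathcal{J}$ by $\Hg_\bullet(U), \tilde{B}_{i,j}, \tilde{\pi}_1, \tilde{\mathcal{B}}, \tilde{\mathcal{J}}$, and then define $M_U$ piecewise on a direct sum decomposition of $\Hg_\bullet(U)$. The only genuinely new input is the identification $\ker(\rho_1) = i(\ker(j))$ (and symmetrically $\ker(\rho_2) = j(\ker(i))$), which was already observed immediately before the lemma.

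\textbf{Step 1: decompose $\Hg_\bullet(U)$.} From the decomposition $\tilde{B}_{i,j}=(\ker(\rho_1),0)\oplus(0,\ker(\rho_2))\oplus\tilde{\mathcal{B}}$, projecting by $\tilde{\pi}_1$ gives $\im(\tilde{\pi}_1)=\ker(\rho_1)\oplus\tilde{\pi}_1(\tilde{\mathcal{B}})$, since $\tilde{\pi}_1$ vanishes on $(0,\ker(\rho_2))$ and is injective on $\tilde{\mathcal{B}}$ by the defining property of $\tilde{\mathcal{B}}$. Using the hypothesis $\im(\tilde{\pi}_1)=\im(i)$ together with $\Hg_\bullet(U)=\im(i)\oplus\tilde{\mathcal{J}}$, I obtain
\[
\Hg_\bullet(U) \;=\; \tilde{\pi}_1(\tilde{\mathcal{B}}) \;\oplus\; i(\ker(j)) \;\oplus\; \tilde{\mathcal{J}},
\]
where I used $\ker(\rho_1)=i(\ker(j))$.

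\textbf{Step 2: define $M_U$ on the summands.} On $\tilde{\pi}_1(\tilde{\mathcal{B}})$, set $M_U(y)=z$, where $z$ is the unique element of $\Hg_\bullet(U)$ with $(y,z)\in\tilde{\mathcal{B}}$; uniqueness of $z$ holds because, by the defining property of $\tilde{\mathcal{B}}$, any two such $z$ must differ by an element whose first coordinate is $0$, forcing it to be $0$. On $i(\ker(j))\oplus\tilde{\mathcal{J}}$, set $M_U=0$. Linearity of $M_U$ on $\tilde{\pi}_1(\tilde{\mathcal{B}})$ follows directly from the vector space structure on $\tilde{\mathcal{B}}$, exactly as in Lemma~\ref{lem:MV}. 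Extending linearly via the direct sum decomposition then produces the desired endomorphism of $\Hg_\bullet(U)$, and by construction $\ker(M_U)=i(\ker(j))\oplus\tilde{\mathcal{J}}$ and $M_U(y)=z$ for all $(y,z)\in\tilde{\mathcal{B}}$.

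\textbf{Step 3: uniqueness.} Any $M_U'$ satisfying the two conditions of the lemma must agree with $M_U$ on each summand: on $\tilde{\pi}_1(\tilde{\mathcal{B}})$ the first condition pins down its value, and on $i(\ker(j))\oplus\tilde{\mathcal{J}}=\ker(M_U')$ it must vanish. Linearity then forces $M_U'=M_U$ on all of $\Hg_\bullet(U)$.

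The main obstacle I anticipate is the bookkeeping in Step~1 --- in particular verifying that $\tilde{\pi}_1$ restricted to $\tilde{\mathcal{B}}$ is injective and that the three subspaces $\tilde{\pi}_1(\tilde{\mathcal{B}})$, $i(\ker(j))$ and $\tilde{\mathcal{J}}$ intersect trivially in pairs. Injectivity is immediate from the defining property of $\tilde{\mathcal{B}}$, while the direct sum statement follows from combining $\im(\tilde{\pi}_1) = \ker(\rho_1)\oplus\tilde{\pi}_1(\tilde{\mathcal{B}})$ with $\Hg_\bullet(U)=\im(i)\oplus\tilde{\mathcal{J}}$, so no real complication arises beyond carefully matching the two decompositions.
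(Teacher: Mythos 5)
Your proposal is correct and follows exactly the paper's route: the paper's proof simply states that the construction is identical to that of Lemma~\ref{lem:MV} using the decomposition $\Hg_\bullet(U)=\tilde{\pi}_1(\tilde{\mathcal{B}})\oplus i(\ker(j))\oplus\tilde{\mathcal{J}}$, which is precisely the decomposition you derive and use. You have merely filled in the bookkeeping (injectivity of $\tilde{\pi}_1|_{\tilde{\mathcal{B}}}$, the identification $\ker(\rho_1)=i(\ker(j))$, and uniqueness) that the paper leaves implicit.
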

\begin{proof}
This is an identical construction to Lemma~\ref{lem:MV} with the decomposition
\[
\Hg_\bullet(U) = \Tilde{\pi}_1(\tilde{\mathcal{B}})\oplus i(\ker(j))\oplus\Tilde{\mathcal{J}}.
\]
\end{proof}

Like with $V$, we may define $\tilde{\mathcal{B}}$ and $\Tilde{\mathcal{J}}$ in terms of the orthogonal compliment.
Explicitly in this case, we will have $\Tilde{\mathcal{J}}=\im(i)^\perp$ and $\Tilde{\pi}_1(\Tilde{\mathcal{B}}) = i\left(\ker(j)\right)^\perp\cap \im(i)^\perp$.
There is, again, also a unique map $\Tilde{M}_U$ obtained through the same procedure, only substituting $i$ and $j$.

\begin{example} \label{ex:braid}
\normalfont Consider $K$ from Example~\ref{ex:toroidal} and set $U$ and $V$ as in Example~\ref{ex:UV}.
Continuing from Equation~\ref{eq:example} and choosing subspaces $\mathcal{B},\tilde{\mathcal{B}},\mathcal{J},\Tilde{\mathcal{J}}$ induced by the orthogonal compliment gives
\[
M_U\equiv \begin{pmatrix}
1 & 1 \\
0 & 0
\end{pmatrix}\;\;\;\;\;\;\;\;\;\; M_V \equiv \begin{pmatrix}
1 & 0 & 1 \\
0 & 0 & 0 \\
0 & 1 & 0
\end{pmatrix}.
\]
In particular, note that 
\[
M_U\circ j = \begin{pmatrix}
1 & 1 & 1 \\
0 & 0 & 0
\end{pmatrix} = j \circ M_V,
\]
and also that
\[
M_U\begin{pmatrix}
1 \\ 0
\end{pmatrix} = \begin{pmatrix}
1 \\ 0
\end{pmatrix}\;\;\;\;\; M_U\begin{pmatrix}
1 \\ 1
\end{pmatrix} = 0\;\;\;\;\; M_V\begin{pmatrix}
1 \\ 0 \\0
\end{pmatrix}=\begin{pmatrix}
1 \\ 0 \\0
\end{pmatrix}\;\;\;\;\; M_V\begin{pmatrix}
1 \\ 0 \\1
\end{pmatrix}=0\;\;\;\;\; M_V^2\begin{pmatrix}
1 \\ 1 \\ 0
\end{pmatrix}=0
\]
meaning that $\gim(M_V)$, $\gim(M_U)$ and $\gker(M_V)$ are one dimensional and $\gker(M_U)$ is two-dimensional.
Moreover, the basis elements for each space are related via $i$ and $j$ as follows:
\[
i\begin{pmatrix}
1 \\ 0 \\0
\end{pmatrix} = j\begin{pmatrix}
1 \\ 0 \\0
\end{pmatrix} = \begin{pmatrix}
1 \\ 0
\end{pmatrix} \;\;\;\;\;\;\;\; i\begin{pmatrix}
1 \\ 0 \\ 1
\end{pmatrix} = j\begin{pmatrix}
1 \\ 1 \\ 0
\end{pmatrix} = \begin{pmatrix}
1 \\ 1
\end{pmatrix} \;\;\;\;\;\;\;\; i\begin{pmatrix}
1 \\ 1 \\0
\end{pmatrix} = j\begin{pmatrix}
1 \\ 0 \\1
\end{pmatrix} = 0.
\]

\end{example}
We point out that $M_U$ and $M_V$ are related since both are defined in terms of the same two maps induced by inclusion.
These relations will allow us to leverage both algebraic and topological symmetries, however there is some subtlety involved which we characterise in the following Lemmas.

\begin{lemma} \label{lem:alg-maps}
Fix $\mathcal{B},\tilde{\mathcal{B}},\mathcal{J},\Tilde{\mathcal{J}}$ as above.
If $j\left(\mathcal{J}\right)\subseteq \Tilde{\mathcal{J}}$ and $j\left(\mathcal{B}\right)=\{(j(x),j(y))\,:\,(x,y)\in\mathcal{B}\}\subseteq (i(\ker(j)), 0)\oplus \tilde{\mathcal{B}}$
\[
M_U \circ j = j \circ M_V.
\]
\end{lemma}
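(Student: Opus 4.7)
The plan is to verify the equality on each summand of the canonical decomposition $\Hg_\bullet(V)=\pi_1(\mathcal{B})\oplus\ker(j)\oplus\mathcal{J}$ from Lemma~\ref{lem:MV}, using the two hypotheses to handle each piece separately.

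On $\ker(j)$ both sides vanish immediately: $M_V$ kills $\ker(j)$ by construction, and $j\circ M_V$ is $0$, while $M_U\circ j$ is $0$ since $j$ is $0$ on $\ker(j)$. On $\mathcal{J}$ we again have $M_V=0$, so $j\circ M_V=0$; the first hypothesis $j(\mathcal{J})\subseteq\tilde{\mathcal{J}}$ places $j(\mathcal{J})$ inside the kernel of $M_U$ (recall $\ker(M_U)=i(\ker(j))\oplus\tilde{\mathcal{J}}$), so $M_U\circ j=0$ on $\mathcal{J}$ as well.

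The remaining case is $\pi_1(\mathcal{B})$, where the second hypothesis does the work. For $x\in\pi_1(\mathcal{B})$ there is a unique $y$ with $(x,y)\in\mathcal{B}$ and $M_V(x)=y$, so I need $j(y)=M_U(j(x))$. By hypothesis, $(j(x),j(y))\in j(\mathcal{B})\subseteq (i(\ker j),0)\oplus\tilde{\mathcal{B}}$, so I can write
\[
(j(x),j(y))=(a,0)+(b,c)
\]
with $a\in i(\ker j)$ and $(b,c)\in\tilde{\mathcal{B}}$. This decomposition forces $j(y)=c$ and $j(x)=a+b$. Applying $M_U$ and using that $M_U$ annihilates $i(\ker j)$ while sending $b$ to $c$ (by the defining property of $\tilde{\mathcal{B}}$), I obtain $M_U(j(x))=M_U(a)+M_U(b)=0+c=j(y)=j(M_V(x))$, as required.

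The only delicate point is verifying the decomposition in the third case is actually well posed, i.e.\ that the $(i(\ker j),0)$ component and the $\tilde{\mathcal{B}}$ component of $(j(x),j(y))$ are uniquely determined; this follows from the direct sum decomposition $\tilde{B}_{i,j}=(\ker\rho_1,0)\oplus(0,\ker\rho_2)\oplus\tilde{\mathcal{B}}$ together with the identification $\ker\rho_1=i(\ker j)$, both of which were established prior to the statement. No further computation should be needed, since both hypotheses are designed precisely to land each summand of $\Hg_\bullet(V)$ inside the corresponding subspace of $\Hg_\bullet(U)$ used to define $M_U$.
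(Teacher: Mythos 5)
Your proof is correct and follows essentially the same route as the paper's: the same three-way case split along $\Hg_\bullet(V)=\pi_1(\mathcal{B})\oplus\ker(j)\oplus\mathcal{J}$, with the two hypotheses used in exactly the same places (the first to kill $M_U\circ j$ on $\mathcal{J}$, the second to decompose $(j(x),j(y))$ inside $(i(\ker j),0)\oplus\tilde{\mathcal{B}}$ and read off $M_U(j(x))=j(y)$). Your remark on the well-posedness of that decomposition, via the direct-sum structure of $\tilde{B}_{i,j}$, is a small but welcome addition that the paper leaves implicit.
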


\begin{proof}
Recall $\Hg_\bullet(V)=\pi_1(\mathcal{B})\oplus\ker(j)\oplus \mathcal{J}$ and $\ker(\rho_1)=i(\ker(j))$.
Thus we need only show that $M_U \circ j = j \circ M_V$ holds on $\ker(j)$, $\mathcal{J}$ and $\pi_1(\mathcal{B})$.

(1) If $x\in\ker(j)$ then $j(x)=0$ and $M_V(x)=0$, so $M_U\circ j(x) = j\circ M_V(x)$ trivially.

(2) If $x\in\mathcal{J}$ then $M_V(x)=0$ and $j(x)\in\Tilde{\mathcal{J}}$ by assumption, so $M_U\circ j(x)=0=j\circ M_V(x)$.

(3) If $x\in \pi_1(\mathcal{B})$ then there exists unique $y\in\Hg_\bullet(V)$ such that $(x,y)\in\mathcal{B}$ and $j\circ M_V(x):=j(y)$.
On the other hand, $(x,y)\in\mathcal{B}$ means $j(x)=i(y)$ and $j(\mathcal{B})\subseteq (i(\ker(j)), 0)\oplus \tilde{\mathcal{B}}$ by assumption.
Thus $M_U\circ j(x)=M_U\circ i(y)$ and 
\[
j(x,y)= (j(x),j(y)) =(i(y),j(y))=(i(y_0),0) + (i(y_1),j(y_1)),
\]
where $y_0\in\ker(j)$ and $(i(y_1),j(y_1))\in\tilde{\mathcal{B}}$.
In particular, $M_U\circ i(y)=M_U\circ i(y_0)+M_U\circ i(y_1)=j(y_1)$ by construction, and $j(y)=j(y_0)+j(y_1)=j(y_1)$.
So $j\circ M_V(x)=j(y) = M_U\circ j(x)$.
\end{proof}

Under similar constraints, we will also have $\Tilde{M}_U \circ i = i \circ \Tilde{M}_V$.
We also note that the construction of $\mathcal{B},\tilde{\mathcal{B}},\mathcal{J},\Tilde{\mathcal{J}}$ via orthogonal compliments satisfies the hypothesis for Lemma~\ref{lem:alg-maps}.
Therefore, for the remainder of this paper we assume this construction.

\begin{remark} \label{rem:zig-zag} Within persistence theory, a natural question is how the choice of basis above relates to the resulting basis given by the algorithms used to compute zigzag persistence \cite{carlsson2010zigzag,milosavljevic2011zigzag} and the resulting representatives \cite{dey2021updating,dey2025fast}. This construction directly also relates to that of \cite{burghelea2013topological} for circle-valued maps. Here we broadly discuss the relationship, but stress that this is purely an aside and is not used further.  
Recall the standard structure theorem of zigzag persistence modules to Equation~\ref{eq:zigzag} yields a decomposition in terms of interval modules. 
That is, the zigzag persistence module can be decomposed into irreducible persistence modules of the form $I[a,b]$ for $-\infty\leq a\leq b\leq \infty$, where $I[a,b](c)=\mathbb{F}$ for $c\in[a,b]$ and $0$ otherwise, and all maps are identity on $I[a,b]$ and $0$ otherwise.
Numbering the resulting zigzag by the natural numbers, we set even integer coordinates to $\Hg_\bullet(V)$  and the odd integer coordinates to $\Hg_\bullet(U)$.
Let $I[i,j](k)$ denote the intervals $[i,j]$ evaluated at the zigzag at $k$ and $I[i,j](k\rightarrow \ell)$ denote the same evaluated on the image $k\rightarrow \ell$, so a subspace of the space at $\ell$. Finally, we denote the lift of $k\rightarrow \ell$ by $(k\rightarrow \ell)^{-1}$, so a subspace at $k$. This leads to another coherent choice of $\mathcal{B},\tilde{\mathcal{B}},\mathcal{J},\Tilde{\mathcal{J}},$ given by the conditions below:
\begin{itemize}
    \item $I[a,0](0)\subseteq \ker(j)$ for all $a\leq 0$,
    \item $I[a,1](0)\subseteq\mathcal{J}$ for all $a\leq 0$,
    \item $I[a,b](0)\subseteq\pi_1(\mathcal{B})$ for all $a\leq 0$ and $b\geq 2$, where $M_V|_{I[a,b](0)}=I[a,b](1\to 2)^{-1}\circ I[a,b](0\to 1) $, 
    \item $I[a,1](1)\subseteq\tilde{\mathcal{J}}$ for all $a\leq 1$,
    \item $I[a,2](1)\subseteq i(\ker(j))$ for all $a\leq 1$,
    \item $I[a,b](1)\subseteq\tilde{\pi}_1(\tilde{\mathcal{B}})$ for all $a\leq 1$ and $b\geq 3$, where $M_U|_{I[a,b](1)}=I[a,b](2\to 3)\circ I[a,b](1\to 2)^{-1} $.
\end{itemize}
In particular, this decomposition will agree with the orthogonal compliment construction.
\end{remark}

Lemma~\ref{lem:alg-maps} is an algebraic property of the construction of $M_U$ and $M_V$, whereas Lemma~\ref{lem:geom-maps} below describes a geometric property inherited from the interpretation of the maps $i$ and $j$ being induced by inclusion.
In order to state this, and for the rest of this paper, we write $\alpha\sim\gamma$ to indicate that $\alpha$ and $\gamma$ are homologous unless otherwise stated.

\begin{lemma} \label{lem:geom-maps}
\normalfont
Suppose $[\alpha],[\alpha']\in \Hg_\bullet(U)$ and $[\theta],[\theta']\in \Hg_\bullet(V)$.
Then for all $k\in\N$ the following hold:
\begin{enumerate}
\item If $M_U^k[\alpha]=[\alpha']$ and $M_U^\ell[\alpha]\in\im(i)$ for each $\ell=0,\dots,k-1$ then $\alpha\sim\tr^{k}(\alpha')$.
\item If $\Tilde{M}_U^k[\alpha]=[\alpha']$ and $\Tilde{M}_U^\ell[\alpha]\in\im(j)$ for each $\ell=0,\dots,k-1$ then $\alpha\sim\tr^{-k}(\alpha')$.
\item If $M_V^k[\theta]=[\theta']$ and $ M_V^\ell[\theta]\in j^{-1}\left(\im(i)\right)$ for each $\ell=0,\dots,k-1$ then $\theta\sim\tr^k(\theta')$.
\item If $\Tilde{M}_V^k[\theta]=[\theta']$ and $\tilde{M}_V^{\ell}[\theta]\in i^{-1}\left(\im(j)\right)$ for each $\ell=0,\dots,k-1$ then $\theta\sim\tr^{-k}(\theta')$.
\end{enumerate}
\end{lemma}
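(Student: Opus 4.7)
The plan is to prove part (1) by induction on $k$, with the remaining parts following from symmetric arguments. The geometric content emerges in the base case $k=1$; the iteration is essentially bookkeeping.

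For $k=1$ of part (1), the first step is to select a chain-level witness $\theta \in \mathrm{C}_\bullet(V)$ realising both $[\alpha] = i[\theta]$ and $M_U[\alpha] = j[\theta]$ simultaneously. Starting from any $\theta_0$ with $i[\theta_0] = [\alpha]$, the pair $(i[\theta_0], j[\theta_0])$ lies in $\tilde{B}_{i,j}$, and decomposing it via $\tilde{B}_{i,j} = (\ker \rho_1, 0) \oplus (0, \ker \rho_2) \oplus \tilde{\mathcal{B}}$ produces a correction term $(0, j(x))$ with $x \in \ker i$. Because $i[x] = 0$, replacing $\theta_0$ by $\theta := \theta_0 - x$ preserves $i[\theta] = [\alpha]$ while absorbing the correction so that $j[\theta] = M_U[\alpha]$. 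With this $\theta$ in hand, the geometry is immediate: inside $K$, $i(\theta)$ equals $\theta \in \mathrm{C}_\bullet(U)$, while $j(\theta)$ is the $\tr^{-1}$-translate of $\theta \in \mathrm{C}_\bullet(\tr(U))$ under the identification $\tr(U)\cong U$. Hence $\alpha \sim \theta$ in $U$ and $\alpha' \sim \tr^{-1}(\theta)$ in $U$, so applying $\tr$ to the latter homology yields $\tr(\alpha') \sim \theta \sim \alpha$ in $K$.

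The inductive step is short. Fix any representative $\beta$ of the class $[\beta] := M_U[\alpha]$; the sequence $[\beta], M_U[\beta], \ldots, M_U^{k-1}[\beta] = [\alpha']$ satisfies the hypothesis of the lemma with $k$ replaced by $k-1$, so by induction $\beta \sim \tr^{k-1}(\alpha')$ in $K$. Applying $\tr$ to this homology (valid since $\tr$ acts as a chain map on $K$) and combining with $\alpha \sim \tr(\beta)$ from the base case gives $\alpha \sim \tr^k(\alpha')$ in $K$.

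Parts (2)--(4) follow by the identical argument applied to the symmetric constructions: part (2) swaps the roles of $i$ and $j$ (and hence of $\tr$ and $\tr^{-1}$) throughout, while parts (3) and (4) run the analogous argument on the pullback $B_{i,j}$ at the level of $\Hg_\bullet(V)$, with the hypotheses $[\theta] \in j^{-1}(\im i)$ and $[\theta] \in i^{-1}(\im j)$ playing the role that $\im(i)$ and $\im(j)$ played before; the key chain-level identification is that $j[\theta] = i[\theta']$ in $\Hg_\bullet(U)$ forces $\tr^{-1}(\theta) \sim \theta'$ in $U$, hence $\theta \sim \tr(\theta')$ in $K$. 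The main obstacle throughout is the base-case translation from the algebraic definition of $M_U$ (which carries intrinsic ambiguity in $\ker \rho_2$ and in the choice of complement $\tilde{\mathcal{J}}$) to the chain-level geometric statement; the pullback decomposition is precisely what allows the correction $\theta_0 \mapsto \theta_0 - x$ that reconciles the two.
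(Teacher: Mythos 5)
Your proof is correct and takes essentially the same route as the paper: both extract from the direct-sum decomposition of the pullback $\tilde{B}_{i,j}$ (resp.\ $B_{i,j}$) a class $[\theta]\in\Hg_\bullet(V)$ satisfying $i[\theta]=[\alpha]$ and $j[\theta]=M_U[\alpha]$, translate this into $\alpha\sim\tr(\alpha')$ via the geometric interpretation of $i$ and $j$ as inclusions into $U$ and $\tr(U)$, and then induct on $k$. The only cosmetic difference is that the paper builds the witness directly by decomposing $[\alpha]\in\im(i)$ into an $i(\ker j)$-part and a $\tilde{\pi}_1(\tilde{\mathcal{B}})$-part and lifting each, whereas you start from an arbitrary preimage $[\theta_0]$ and subtract a correction coming from $\ker\rho_2=j(\ker i)$.
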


\begin{proof}
We have the following cases for $M_U$ and $M_V$ for $k=1$ below.
Applying this inductively we obtain the results for $k>1$. 
The proofs for $\tilde{M}_U$ and $\tilde{M}_V$ are identical.
\begin{enumerate}
\item If $M_V[\theta]=[\theta']$ and $[\theta]\in j^{-1}\left(\im(i)\right)$ then there exists $\theta_0$ and $\theta_1$ such that $[\theta]=[\theta_0]+[\theta_1]$ with $[\theta_0]\in\ker(j)$ and $([\theta_1],[\theta'])\in\mathcal{B}$.
This means $j[\theta]=j[\theta_1]=i[\theta']$ in $\Hg_\bullet(U)$.
But $i$ is induced by $U\hookleftarrow V$ and $j$ is induced by $V\inj\tr(U)$, so $[\tr^{-1}(\theta)] = [j(\theta)] = [i(\theta')] = [\theta']$ in $\Hg_\bullet(U)$, and in turn $[\theta]=[\tr(\theta')]$. Similarly, the natural embedding $U\inj K$ then implies $[\theta]=[\tr(\theta')]$ in $\Hg_\bullet(K)$, or equivalently $\theta\sim\tr(\theta')$.
\item If $M_U[\alpha]=[\alpha']$ and $[\alpha]\in \im(i)$ then there exists cycles $\gamma_0,\gamma_1$ of $V$ such that $[\alpha]=i[\gamma_0]+i[\gamma_1]$ with $[\gamma_0]\in \ker(j)$, and $(i[\gamma_1],j[\gamma_1])\in\Tilde{\mathcal{B}}$ such that $j[\gamma_1]=[\alpha']$.
In other words, $[\alpha]=i[\gamma_0+\gamma_1]$ and $j[\gamma_0+\gamma_1]=[\alpha_1]$.
Interpreting $i,j$ as being induced by inclusions as with the first proof, it follows that $[\alpha]=[\tr(\alpha')]$ in $\Hg_\bullet(K)$, or equivalently $\alpha\sim\tr(\alpha')$.
\end{enumerate}
\end{proof}

\begin{remark}
\normalfont
The converse of all the statements of Lemma~\ref{lem:geom-maps} are in general false.
For instance, in Example~\ref{ex:UV}, all connected components of $U$ and $V$ are homogeneous in $K$.
Instead, we interpret $M_U$ as projecting $\Hg_\bullet(U)$ onto a local representative of the same cycle in $\Hg_\bullet(\tr(U))$, with similar interpretations for all other endomorphisms.
\end{remark}

\section{Toroidal cycles} \label{sec:toroidal}
Recall from Definition~\ref{def:toroidal} that ``toroidal cycles'' are cycles of $G_n$ which are \textit{not} in the image of the homology map induced by $q_n:K\to G_n$. We apply the machinery developed above to characterizing toroidal cycles. 
By localizing toroidal cycles, we will be able to classify them entirely by the generalized image of $M_V$ or $M_U$.
\begin{theorem} \label{thm:isomorphisms}
There exists $\varphi_1,\varphi_2,\varphi_3,\varphi_4$ such that
\[
\begin{tikzcd}
{\gim(M_U)} & {\gim(M_V)} & {\Hg_\bullet(G_n)/\Ig^n} & {\gim(\tilde{M}_V)} & {\gim(\tilde{M}_U)}
\arrow["\varphi_3"', from=1-2, to=1-1]
\arrow["\varphi_4", from=1-4, to=1-5]
\arrow[hook',"\varphi_1"', from=1-3, to=1-2]
\arrow[hook,"\varphi_2", from=1-3, to=1-4]
\end{tikzcd}
\]
where $\varphi_3$ and $\varphi_4$ are isomorphisms.
\end{theorem}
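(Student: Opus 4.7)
My plan is to construct $\varphi_3, \varphi_4$ algebraically from the inclusion-induced maps, and $\varphi_1, \varphi_2$ via the Mayer--Vietoris long exact sequence for the circular cover $\mathcal{U}_n$ of $G_n$. The crucial ingredients are the intertwining in Lemma~\ref{lem:alg-maps}, the decomposition in Lemma~\ref{lem:gen-imker}, and the identification $\im(\iota_n) = \Ig^n$ obtained by comparing with the infinite linear cover of $K$.

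For $\varphi_3, \varphi_4$, I set $\varphi_3 := j|_{\gim(M_V)}$ and $\varphi_4 := i|_{\gim(\tilde{M}_V)}$. Lemma~\ref{lem:alg-maps} gives $j \circ M_V^k = M_U^k \circ j$, so $j$ carries $\gim(M_V)$ into $\gim(M_U)$ and $\gker(M_V)$ into $\gker(M_U)$, making $\varphi_3$ well-defined. Injectivity is immediate from $\ker(j) \subseteq \ker(M_V) \subseteq \gker(M_V)$ combined with $\gker \cap \gim = 0$. For surjectivity, the construction of $M_U$ via $\tilde{\mathcal{B}} \subseteq \tilde{B}_{i,j} = \{(i(x), j(x)) : x \in \Hg_\bullet(V)\}$ forces $\im(M_U) \subseteq \im(j)$, so any $\alpha \in \gim(M_U)$ lifts to some $x \in \Hg_\bullet(V)$; splitting $x = x_0 + x_1 \in \gker(M_V) \oplus \gim(M_V)$ via Lemma~\ref{lem:gen-imker}, the intertwining yields $j(x_0) \in \gker(M_U)$, which must vanish since $\alpha \in \gim(M_U)$, giving $\alpha = \varphi_3(x_1)$. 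The construction of $\varphi_4$ is symmetric, swapping $i \leftrightarrow j$ and $M \leftrightarrow \tilde{M}$.

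For $\varphi_1$ I apply Mayer--Vietoris to $\mathcal{U}_n$, obtaining
\[
\bigoplus_{k=0}^{n-1} \Hg_\bullet(V) \xrightarrow{\alpha_n} \bigoplus_{k=0}^{n-1} \Hg_\bullet(U) \xrightarrow{\iota_n} \Hg_\bullet(G_n) \xrightarrow{\partial_n} \bigoplus_{k=0}^{n-1} \Hg_{\bullet-1}(V) \to \cdots
\]
with $\alpha_n((x_k)) = (i(x_k) - j(x_{k-1}))_k$. I will show $\im(\iota_n) = \Ig^n$: the inclusion $\subseteq$ is clear since any local cycle in $U_k$ lifts to a cycle in $\tr^k(U) \subseteq K$, while the reverse inclusion follows from the analogous Mayer--Vietoris sequence for the infinite linear cover of $K$ whose contractible nerve gives $\bigoplus_{k \in \Z} \Hg_\bullet(U_k) \twoheadrightarrow \Hg_\bullet(K)$; composition with $q_{n*}$ factors through $\iota_n$. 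Exactness then yields an isomorphism $\partial_n : \Hg_\bullet(G_n)/\Ig^n \xrightarrow{\cong} \ker(\alpha_n)$. I define $\varphi_1$ as this isomorphism composed with projection onto the $k=0$ summand followed by the projection $P_V : \Hg_\bullet(V) \twoheadrightarrow \gim(M_V)$ along $\gker(M_V)$. For $(x_k) \in \ker(\alpha_n)$ the cyclic pairs $(x_{k-1}, x_k) \in B_{i,j}$ and the decomposition $B_{i,j} = (\ker j, 0) \oplus (0, \ker i) \oplus \mathcal{B}$ force $x_k = M_V(x_{k-1}) + b_k$ for some $b_k \in \ker(i)$; iterating around the cycle gives $(M_V^n - \mathrm{id})(x_0) \in \sum_{l} M_V^{n-l}(\ker i)$, and projecting along $\gker(M_V)$ makes the noise collapse, so $P_V(x_0)$ is a fixed point of $(M_V|_{\gim(M_V)})^n$ lying in $\gim(M_V)$. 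Injectivity follows because if $P_V(x_0) = 0$ then the cyclic propagation forces each $x_k \in \gker(M_V)$, which via $\alpha_n$-exactness places $[\gamma] \in \im(\iota_n) = \Ig^n$. The map $\varphi_2$ is defined and analysed dually using $\tilde{M}_V$.

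For the final claim, $M_V|_{\gim(M_V)}$ is an automorphism of a finite-dimensional space; for any $n$ satisfying $(M_V|_{\gim(M_V)})^n = \mathrm{id}$, a preimage of $[z] \in \gim(M_V)$ is produced by the tuple $x_k := (M_V|_{\gim(M_V)})^k [z]$, which lies in $\ker(\alpha_n)$ because $(x_{k-1}, x_k) \in \mathcal{B}$ forces $j(x_{k-1}) = i(x_k)$ and the cyclic closure is built-in; pulling back through $\partial_n^{-1}$ yields the required toroidal class. The main obstacle I anticipate is the careful bookkeeping of the $\ker(i)$ and $\ker(j)$ noise contributions in the cyclic $B_{i,j}$ decomposition and the verification that projection along $\gker(M_V)$ absorbs these, which is where the orthogonal-complement structure imposed on $\mathcal{B}, \mathcal{J}, \tilde{\mathcal{B}}, \tilde{\mathcal{J}}$ does the essential work.
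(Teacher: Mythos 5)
Your overall outline matches the paper's strategy closely: both use a Mayer--Vietoris argument for the circular cover $\mathcal{U}_n$ to build $\varphi_1,\varphi_2$, and both take $\varphi_3 = j$ restricted to the generalized image. Your $\varphi_3$ argument is in fact slightly tighter than the paper's (which works with an explicit power $\im(M_V)^N$ rather than splitting $x = x_0 + x_1$ and invoking $\gker \cap \gim = 0$). However, there are two genuine gaps in the $\varphi_1$ portion.

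First, your injectivity and well-definedness of $\varphi_1$ both lean on the ``noise collapse'' claim, which amounts to asserting that the $\ker(i)$ corrections $v_k$ in $x_k = M_V(x_{k-1}) + v_k$ vanish after projection along $\gker(M_V)$, or equivalently $\ker(i) \subseteq \gker(M_V)$. This containment is not obvious, is not established by Lemma~\ref{lem:alg-maps} or the orthogonal-complement construction, and you yourself flag it as ``where the essential work is'' without supplying it. Moreover, even granting that all $x_k$ land in $\gker(M_V)$, that alone does not place the class $[\gamma]$ in $\im(\iota_n) = \Ig^n$ (exactness would require $(x_k) = 0$ as a tuple, not merely that its entries lie in the generalized kernel). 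The paper avoids this entirely: it lifts $\alpha$ to a finitely supported chain $\tilde\alpha$ in $K$ and uses the blow-up complex over the infinite linear cover, so the eventual vanishing of $\tilde\theta_\ell$ forces $[\theta_0] \in \gker(M_V)$ directly, and for a toroidal class it extracts an explicit relation $(M_V)^l[\theta_0] = [\theta_0]$ from the periodicity of $\D\tilde\alpha = \sum_k c_k(\tilde\sigma_k - \tr^{n\cdot m_k}\tilde\sigma_k)$, placing $[\theta_0]$ in $\gim(M_V)$.

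Second, your final step asserts that $\varphi_1$ is surjective for any $n$ with $(M_V|_{\gim(M_V)})^n = \mathrm{id}$. But you never establish that $M_V|_{\gim(M_V)}$ has finite order: it is an automorphism of a finite-dimensional $\F$-vector space, which over an infinite field need not be of finite order. Finite order is precisely the content supplied in the paper by Proposition~\ref{prop:toroidal-basis} together with the minimal winding numbers $m_{\alpha_i,k}$, which you omit entirely. Supplying that proposition (or some equivalent showing that toroidal boundaries only wind finitely) is required to make your last paragraph go through.
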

The proof is constructive, as we explicitly define the maps in the proof. Before proving Theorem~\ref{thm:isomorphisms}, however, we first need to introduce some important facts about the structure of toroidal cycles.
First, for any $[\alpha]\in \Hg_\bullet(G_n)$ there is a (not necessarily 
unique) $\tilde{\alpha}\in C_\bullet(K)$ for which
\begin{itemize}
\item $q_n(\tilde{\alpha})=\alpha$.
\item There is a set $S$ and for each $k\in S$ there is $c_k\in \F$, $m_k\in\Z$ and a cell $\tilde{\sigma_k}$ such that \[ \D\tilde{\alpha} = \sum_{k\in S} c_k\left(\tilde{\sigma_k}-\tr^{n\cdot m_k}(\tilde{\sigma_k})\right). \]
\item The cardinality of $S$ is finite and minimal with respect to these properties.
\end{itemize}
Non-uniqueness follows from the fact that if $\tilde{\alpha}$ satisfies these properties then so do $\tr^n(\tilde{\alpha})$ and $\tr^{-n}(\tilde{\alpha})$.
Conversely, $\alpha$ is non-toroidal if and only if $S$ is empty. Indeed, if $S=\emptyset$ then $\tilde{\alpha}$ is a cycle, so $[\alpha]\in\Ig^n$, and if $S\neq\emptyset$ then minimality of $S$ means there are no cycles in $K$ which map to $\alpha$, so $[\alpha]\not\in\Ig^n$.

These facts are essential for Theorem~\ref{thm:isomorphisms}, where we will leverage the above representative of $\D\tilde{\alpha}$ to represent toroidal cycles faithfully in $\gim(M_V)$.
Lemma~\ref{lem:geom-maps} above essentially shows that $M_U$ and $M_V$ keep track of the action of the fundamental group of $\mathbb{S}^1$ on $K$, also known as the holonomy \cite{zein2010local}.
At a high level, we will show that if $[\gamma]$ is in the generalized image of $M_U$ or $M_V$ then the orbit of the homology class $[\tilde{\gamma}]$ by the lift of the fundamental group action is finite (although we will not use this language in doing so). 
In particular, we will be able to identify $\tilde{\gamma}$ and its orbit with $\D\tilde{\alpha}$ for some toroidal cycle $\alpha$.

We now have all the required machinery to prove Theorem~\ref{thm:isomorphisms}.
\begin{proof}[Proof of Theorem~\ref{thm:isomorphisms}]
By symmetry, we need only consider the maps $\varphi_1$ and $\varphi_3$.

\noindent \textbf{($\mathbf{\varphi_1}$):} 
Consider the Mayer-Vietoris spectral sequence (see Appendix~\ref{sec:MVSS}) obtained by covering $G_n$ with $\mathcal{U}_n$ mentioned previously.
Recall that we also assume $U\cap \tr^2(U)=\emptyset$, meaning at most pairwise intersections of the cover are non-trivial and we need only consider the $p=0$ and $p=1$ columns of the spectral sequence.
We have an isomorphism $\phi_n$ between $\Hg_\bullet(G_n)$ and the homology of the total complex $\Hg_\bullet(T_{\mathcal{U}_n,G_n})$, where each $[\alpha]\in \Hg_\bullet(G_n)$ corresponds to 
\[
\Hg_\bullet(T_{\mathcal{U}_n,G_n})\ni[(\gamma_0,\gamma_1)]=\left[\left(\sum_{\ell=0}^{n-1}\alpha_\ell,\sum_{\ell=0}^{n-1}\theta_\ell\right)\right],
\]
where $\alpha_\ell\in C_\bullet(\tr^\ell(U))$, $\theta_\ell\in C_\bullet(\tr^\ell(V))$, $\D^0\gamma_0=\D^1\gamma_1$ and $\alpha\sim\sum_{\ell=0}^{n-1}\alpha_\ell$ in $G_n$. This is precisely the statement of Theorem~\ref{thm:mvss} in the Appendix -- specifically see Equation~\ref{eq:correspondence}.
Under this identification, if $\alpha\sim\alpha'$ in $G_n$ then $[(\gamma_0,\gamma_1)]=[(\gamma'_0,\gamma'_1)]$ in $\Hg_\bullet(T_{\mathcal{U}_n,G_n})$, which in turn implies $\theta_\ell\sim\theta'_\ell$ in $\tr^\ell(V)$ for each $\ell=0,\dots,n-1$.
Thus $\psi_\ell:\Hg_\bullet(G_n)\to \Hg_\bullet(V)$, where $\psi_\ell[\alpha]=[\theta_\ell]$, is a well-defined homomorphism for each $\ell$.
Since $\Hg_\bullet(V)$ is finite dimensional, $\Hg_\bullet(V)=\gker(M_V)\oplus\gim(M_V)$.
We will show that $\psi_0$ maps non-toroidal cycles, $\Ig^n$, to $\gker(M_V)$ and so passes to an injective homomorphism $\tilde{\psi}_0:\Hg_\bullet(G_n)/\Ig^n\inj \gim(M_V)$.

First, if $[\alpha]\in \Ig^n$ then lifting to $K$, we have the following commutative diagram
\[
\begin{tikzcd}
	{\Ig^n} & {\Hg_\bullet(T_{\mathcal{U}_n,G_n})} & {\Hg_\bullet(V)} \\
	{\Hg_\bullet(K)} & {\Hg_\bullet(T_{\mathcal{U},K})} & {\bigoplus_{\ell\in\Z}\Hg_\bullet(V)}
	\arrow[hook', from=1-1, to=1-2]
	\arrow["\cong", from=2-1, to=2-2]
	\arrow[from=1-2, to=1-3]
	\arrow[from=2-2, to=2-3]
	\arrow["\psi_0", bend left = 15, from=1-1, to=1-3]
	\arrow[two heads, from=2-1, to=1-1]
	\arrow[two heads, from=2-3, to=1-3]
\end{tikzcd}
\]
which maps elements as follows
\[
\begin{tikzcd}
	{[\alpha]} & {\left[\left(\sum_{\ell=0}^{n-1}\alpha_\ell,\sum_{\ell=0}^{n-1}\theta_\ell\right)\right]} & {[\theta_0]} \\
	{[\tilde{\alpha}]} & {\left[\left(\sum_{\ell=M}^{N}\tilde{\alpha}_\ell,\sum_{\ell=M}^{N}\tilde{\theta}_\ell\right)\right]} & {\sum_{M/n < \ell < N/n}[\tilde{\theta}_{n\ell}]}
	\arrow[from=1-1, to=1-2]
	\arrow[from=2-1, to=2-2]
	\arrow[from=1-2, to=1-3]
	\arrow[from=2-2, to=2-3]
	\arrow[from=2-1, to=1-1]
	\arrow[from=2-3, to=1-3].
\end{tikzcd}
\]
In the diagram above, for a fixed choice of cycle $\tilde{\alpha}$ in $K$ which maps to $\alpha$ in $G_n$, we use $M$ and $N$ to denote the minimal and maximal integers respectively so that $\tilde{\alpha}$ is supported on $\bigcup_{\ell=M}^N\tr^\ell(U)$.
By construction of the total complex, note that $j(q_n(\tilde{\theta}_\ell)) = q_n(\tilde{\alpha_\ell})=i(q_n(\tilde{\theta}_{\ell+1}))$, which by construction of $M_V$ in terms of orthogonal compliments means $M_V[q_n(\tilde{\theta}_\ell)]=[q_n(\tilde{\theta}_{\ell+1})]$ for each $\ell$.
Specifically, $M_V[q_n(\tilde{\theta}_N)]=0$, so by an inductive argument on each $\ell$ it follows that $[\theta_0]\in\gker(M_V)$ and hence $\psi_0[\alpha]\in\gker(M_V)$.

We have therefore established that $\tilde{\psi}_0:\Hg_\bullet(G_n)/\Ig^n\inj \gim(M_V)\cong \Hg_\bullet(V)/\gker(M_V)$, where $\tilde{\psi}_0[\alpha]=[\theta_0] + \gker(M_V)$, is well-defined.
To show that it is injective, we need only show that if $[\alpha]\not\in \Ig^n$ then $\psi_0[\alpha]=[\theta_0]\not\in \gker(M_V)$.
We first consider the generalization of the above commuting diagram:
\[
\begin{tikzcd}
	{\Hg_\bullet(G_n)} & {\Hg_\bullet(T_{\mathcal{U}_n,G_n})} & {\Hg_\bullet(V)} \\
	{q_n^{-1}(Z(G_n))} & {E^0_{0\bullet}\bigoplus Z(E^0_{1\bullet})} & {\bigoplus_{\ell\in\Z}\Hg_\bullet(V)}
	\arrow["\cong", from=1-1, to=1-2]
	\arrow[hook, from=2-1, to=2-2]
	\arrow[from=1-2, to=1-3]
	\arrow[two heads, from=2-2, to=2-3]
	\arrow["\psi_0", bend left = 20, from=1-1, to=1-3]
	\arrow[two heads, from=2-1, to=1-1]
	\arrow[two heads, from=2-3, to=1-3]
\end{tikzcd}
\]
which now maps elements as follows
\[
\begin{tikzcd}
	{[\alpha]} & {\left[\left(\sum_{\ell=0}^{n-1}\alpha_\ell,\sum_{\ell=0}^{n-1}\theta_\ell\right)\right]} & {[\theta_0]} \\
	{\tilde{\alpha}} & {\left(\sum_{\ell=M}^{N}\tilde{\alpha}_\ell,\sum_{\ell=M}^{N}\tilde{\theta}_\ell\right)} & {\sum_{M/n < \ell < N/n}[\tilde{\theta}_{n\ell}]}
	\arrow[from=1-1, to=1-2]
	\arrow[from=2-1, to=2-2]
	\arrow[from=1-2, to=1-3]
	\arrow[from=2-2, to=2-3]
	\arrow[from=2-1, to=1-1]
	\arrow[from=2-3, to=1-3].
\end{tikzcd}
\]
In the diagram above, similar to before, we use $M$ and $N$ to denote the minimal and maximal integers respectively so that $\tilde{\alpha}$ is supported on $\bigcup_{\ell=M}^N\tr^\ell(U)$.
Let $S$ be a finite set described earlier such that $\D \tilde{\alpha}=\sum_{k\in S}c_k(\tilde{\sigma_k}-\tr^{n\cdot m_k}(\tilde{\sigma_k}))$.
$S$ will be non-empty, so $g=\mathrm{gcd}(n\cdot m_k)$ and $l=\mathrm{lcm}(\cdot m_k)$ are well-defined.
By construction of the total complexes, we have $\tilde{\alpha}_\ell=j(\tilde{\theta}_\ell)=i(\tilde{\theta}_{\ell+1})$ for each $\ell=M,M+1,\dots,N$.
The addition of periodic copies of $\tilde\alpha$ will be ``glued'' to the boundary of $\tilde{\alpha}$, so we can extend this relation so that $j(\tilde{\theta}_N)=\tr^l(\tilde{\alpha}_\ell)=\tr^l(i(\tilde{\theta})_\ell))$ for some $\ell$.
Thus $M_V^l[q_n(\tilde{\theta}_\ell)]=[q_n(\tilde{\theta}_\ell)]$.
By this relation we will have $M_V^l[\theta_0]=[\theta_0]$, so $[\theta_0]\in \gim(M_V)$.

\vspace{0.2cm}

\noindent \textbf{($\mathbf{\varphi_3}$):} Recall from Lemma~\ref{lem:alg-maps} that $M_U\circ j = j\circ M_V$.
Since $\Hg_\bullet(U)$ and $\Hg_\bullet(V)$ are finite dimensional, there must exist some $N\in\N$ for which $\gim(M_V)=\im(M_V^N)$ and $\gim(M_U)=\im(M_U^N)$.
Moreover, by Lemma~\ref{lem:gen-imker}, $M_V$ will be an isomorphism on $\im(M_V^N)$ and likewise for $M_U$.

If $x\in\ker(j)\cap\im(M_V^N)$, then by construction $M_V^N(x)=M_V(x) = 0$, implying $x=0$, so $j$ is injective on $\im(M_V^N)$.
By repeated application of Lemma~\ref{lem:alg-maps}, $M_U^N\circ j = j\circ M_V^N$, and we observe that
\[
j\left(\im(M_V^N)\right) = (M_U^N)(\im(j)) \subset \im(M_U^N).
\]
Hence, $j$ specifically embeds $\im(M_V^N)$ into $\im(M_U^N)$.
Further, $\im(M_U^N)=\im(M_U^{N+1})=\gim(M_U)$, and since $M_U$ is an isomorphism on $\im(M_U^N)$, for every $x\in \im(M_U^N)$ there exists a $y\in\im(M_U^N)$ such that $M_U^{N+1}y=x$.
However, by construction, $M_U(y) = j(z)$ for some $z\in \Hg_\bullet(V)$.
Together this implies that $x = M_U^N\circ j(z) = j\circ M_V^N(z)$.
Hence, $\varphi_3=j|_{\im(M_V^N)}$.

\end{proof}

We note that the maps $M_V$ and $M_U$ encode the translational symmetry on the generalized image. 
The fact that $\varphi_3$ is a restriction of $j$ also implies that $\gim(M_U)\subseteq \im(i)$ and $j(\gim(M_V))\subseteq\im(i)$.

\begin{corollary} \label{cor:general-image}
If $[\alpha],[\alpha']\in\gim(M_{U})$, $[\beta],[\beta']\in\gim(\tilde{M}_{U})$, $[\theta],[\theta']\in\gim(M_{V})$ and $[\delta],[\delta']\in\gim(\tilde{M}_{V})$ then
\begin{enumerate}
    \item If $M_{U}^n[\alpha]=[\alpha']$ then $\alpha\sim \tr^n(\alpha')$.
    \item If $\tilde{M}_{U}^n[\beta]=[\beta']$ then $\beta\sim \tr^{-n}(\beta')$.
    \item If $M_{V}^n[\theta]=[\theta']$ then $\theta\sim \tr^n(\theta')$.
    \item If $\tilde{M}_{V}^n[\delta]=[\delta']$ then $\delta\sim \tr^{-n}(\delta')$.
\end{enumerate}
\end{corollary}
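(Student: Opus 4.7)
The plan is to reduce Corollary~\ref{cor:general-image} directly to Lemma~\ref{lem:geom-maps}. That lemma already supplies the desired homology relationship, but at the cost of a side hypothesis asserting that every iterate $(M_U)^\ell[\alpha]$ lies in $\im(i)$, every iterate $(M_V)^\ell[\theta]$ lies in $j^{-1}(\im(i))$, and so on. The job is therefore to show that restricting to the generalized image makes these side hypotheses automatic; once that is done the corollary follows by citing Lemma~\ref{lem:geom-maps} four times, once per case.

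First I would record the \emph{stability} step: for any endomorphism $E$ on a finite-dimensional vector space, $\gim(E)$ is $E$-invariant, and by Lemma~\ref{lem:gen-imker}, $E$ restricts to an \emph{isomorphism} on $\gim(E)$. Consequently, if $[\alpha]\in\gim(M_U)$ then $(M_U)^\ell[\alpha]\in\gim(M_U)$ for every $\ell\geq 0$, and analogously for the other three endomorphisms.

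Second I would invoke the \emph{containment} step. The remark immediately after the proof of Theorem~\ref{thm:isomorphisms} already records $\gim(M_U)\subseteq \im(i)$ and $j(\gim(M_V))\subseteq \im(i)$; rewriting the latter gives $\gim(M_V)\subseteq j^{-1}(\im(i))$ (using that $M_V$ preserves $\gim(M_V)$ and that $\ker(j)\cap \gim(M_V)=0$ by Lemma~\ref{lem:gen-imker}). The corresponding statements $\gim(\tilde{M}_U)\subseteq \im(j)$ and $\gim(\tilde{M}_V)\subseteq i^{-1}(\im(j))$ follow by rerunning the proof of Theorem~\ref{thm:isomorphisms} with the roles of $i$ and $j$ exchanged, which is exactly what $\tilde{M}_U$ and $\tilde{M}_V$ were defined to encode.

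Combining the two steps, in each of the four cases the hypothesis of the corresponding item in Lemma~\ref{lem:geom-maps} holds for free: e.g., in case (1), every iterate $(M_U)^\ell[\alpha]$ lands in $\gim(M_U)\subseteq \im(i)$, so Lemma~\ref{lem:geom-maps}(1) applies and gives $\alpha\sim \tr^n(\alpha')$; cases (2)--(4) are identical after replacing $i\leftrightarrow j$ and/or $U\leftrightarrow V$. The only mild obstacle I anticipate is cleanly stating the symmetric containments for the tilde endomorphisms, since the body text only spells out the untilded versions; but this is purely bookkeeping and introduces no new ideas, so I expect the proof to be a short three- or four-line argument organised as: invariance of generalized image, containment in the required (pre)image, and invocation of Lemma~\ref{lem:geom-maps}.
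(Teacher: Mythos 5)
Your argument is correct and follows essentially the same route as the paper: reduce to Lemma~\ref{lem:geom-maps} via the containments $\gim(M_U)\subseteq\im(i)$ and $j(\gim(M_V))\subseteq\im(i)$ recorded just before the corollary, together with $E$-invariance of $\gim(E)$ so that every iterate remains there. (One small simplification: the parenthetical invoking $\ker(j)\cap\gim(M_V)=0$ is unnecessary, since $j(\gim(M_V))\subseteq\im(i)$ already yields $\gim(M_V)\subseteq j^{-1}(\im(i))$ by the definition of preimage.)
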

\begin{proof}
This follows immediately from Lemma~\ref{lem:geom-maps} and the fact $\gim(M_U)\subseteq \im(i)$ and $j(\gim(M_V))\subseteq\im(i)$.
\end{proof}
As an immediate consequence, we conclude the following result which relates $M_U$ and $M_V$ with $\tilde{M}_U$ and $\tilde{M}_V$ respectively.
\begin{corollary} \label{cor:equivalences} The following relations hold
\begin{align*}
\gim(M_V) & = \gim(\tilde{M}_V) := Z_V\\
\gim(M_U) & = \gim(\tilde{M}_U) := Z_U \\
(M_V|_{Z_V})^{-1} & = \tilde{M}_V|_{Z_V}\\
(M_U|_{Z_U})^{-1} & = \tilde{M}_U|_{Z_U}
\end{align*}
\end{corollary}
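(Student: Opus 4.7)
My plan is to prove the four assertions in the order they appear, by revisiting the proof of Theorem~\ref{thm:isomorphisms} together with Corollary~\ref{cor:general-image}.

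To establish $\gim(M_V) = \gim(\tilde{M}_V)$, I would revisit the map $\psi_0 : \Hg_\bullet(G_n) \to \Hg_\bullet(V)$ from the proof of Theorem~\ref{thm:isomorphisms}, which extracts the $V$-component of a Mayer--Vietoris representative. That proof shows $\psi_0$ lands in $\gim(M_V)$ and that $\varphi_1$ (induced by $\psi_0$) is an isomorphism for suitable $n$. Running the identical argument with $i$ and $j$ exchanged (and correspondingly $\mathcal{B} \leftrightarrow \mathcal{D}$, $M_V \leftrightarrow \tilde{M}_V$) shows that the same $\psi_0$ lands in $\gim(\tilde{M}_V)$, with $\varphi_2$ an isomorphism for the same $n$. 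Hence $\psi_0(\Hg_\bullet(G_n)/\Ig^n)$ coincides with both $\gim(M_V)$ and $\gim(\tilde{M}_V)$, forcing $\gim(M_V) = \gim(\tilde{M}_V) =: Z_V$. The analogous argument at the level of $U$ yields $\gim(M_U) = \gim(\tilde{M}_U) =: Z_U$.

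For the inverse relations, Lemma~\ref{lem:gen-imker} gives that $M_V|_{Z_V}$ and $\tilde{M}_V|_{Z_V}$ are automorphisms, so it suffices to verify $M_V \circ \tilde{M}_V = \mathrm{id}_{Z_V}$. Given $[\theta] \in Z_V$, set $[\theta'] = \tilde{M}_V[\theta]$ and $[\theta''] = M_V[\theta']$; both lie in $Z_V$. By Corollary~\ref{cor:general-image}(3) and~(4), $\theta \sim \tr^{-1}(\theta')$ and $\theta' \sim \tr(\theta'')$ in $K$, so $\theta \sim \theta''$ in $K$. To upgrade this to equality in $Z_V$, I use that the defining pullbacks witness these translations via the explicit equalities $i(\theta') = j(\theta)$ (from $\tilde{M}_V[\theta] = [\theta']$, using $(\theta', \theta) \in \mathcal{D}$) and $i(\theta') = j(\theta'')$ (from $M_V[\theta'] = [\theta'']$, using $(\theta', \theta'') \in \mathcal{B}$); hence $j(\theta) = j(\theta'')$ in $Z_U$. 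Since $\varphi_3 = j|_{Z_V} : Z_V \to Z_U$ is an isomorphism by Theorem~\ref{thm:isomorphisms}, we conclude $[\theta] = [\theta'']$ in $Z_V$, yielding claim~3. The symmetric argument (using $\varphi_4 = i|_{Z_V}$ in place of $\varphi_3$, and the analogous $U$-level maps) establishes claim~4.

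The main technical subtlety is ensuring that on $Z_V$ the pullback relations $i(\theta') = j(\theta)$ hold \emph{exactly}, rather than only modulo $\ker(j) \oplus \mathcal{J}$ or $\ker(i) \oplus \mathcal{I}$. This amounts to the inclusions $Z_V \subseteq \pi_1(\mathcal{B}) \cap \pi_2(\mathcal{D})$ in the canonical orthogonal-complement construction, which should follow from Lemma~\ref{lem:gen-imker} (giving $Z_V \cap \ker(M_V) = Z_V \cap \ker(\tilde{M}_V) = 0$) combined with the orthogonality conditions that define $\mathcal{B}$ and $\mathcal{D}$. This alignment is the one place where the canonical choice of complements matters critically, and verifying it carefully is where I expect the bulk of the work in a full proof to go.
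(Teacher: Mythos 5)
The overall structure of your proposal is the right one, and it is essentially the only reasonable reading of the paper's terse ``as an immediate consequence'': use the symmetry of the Mayer--Vietoris projection $\psi_0$ from the proof of Theorem~\ref{thm:isomorphisms} for the two equalities of generalized images, and use Corollary~\ref{cor:general-image} together with the pullback relations and the injectivity of $\varphi_3$ (or $\varphi_4$) for the inverse identities. The paper supplies no proof, so there is nothing to contrast against directly, but your reconstruction is the expected one.

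The one place where I would push back is your ``main technical subtlety.'' You worry that the argument requires the inclusion $Z_V \subseteq \pi_1(\mathcal{B}) \cap \pi_2(\mathcal{D})$ and that verifying it is ``where the bulk of the work'' lies. That inclusion is in fact not needed and need not hold: in general $\gim(M_V)\subseteq \im(M_V)=\pi_2(\mathcal{B})$ and $\gim(\tilde{M}_V)\subseteq\im(\tilde{M}_V)=\pi_1(\mathcal{D})$, and there is no reason for those to equal the domains $\pi_1(\mathcal{B}),\pi_2(\mathcal{D})$. What your argument actually uses is that the identities $j = i\circ M_V$ and $i = j\circ \tilde{M}_V$ hold on $Z_V$. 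But both identities hold on the \emph{larger} subspaces $j^{-1}(\im i) = \pi_1(\mathcal{B})\oplus\ker(j)$ and $i^{-1}(\im j) = \pi_2(\mathcal{D})\oplus\ker(i)$ respectively, because $M_V$ annihilates the $\ker(j)$ summand and $\tilde{M}_V$ annihilates the $\ker(i)$ summand, so the extra terms drop out on both sides. The needed containments $Z_V\subseteq j^{-1}(\im i)$ and $Z_V \subseteq i^{-1}(\im j)$ are precisely the observations the paper records immediately after Theorem~\ref{thm:isomorphisms} ($j(\gim M_V)\subseteq \im i$; the symmetric statement $i(\gim\tilde{M}_V)\subseteq\im j$ follows by the same argument applied to $\tilde{M}$). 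So the ``subtlety'' is already resolved by facts the paper hands you, and no further careful alignment of complements is required. A minor point: your equalities $i(\theta')=j(\theta)$, $i(\theta')=j(\theta'')$ follow the paper's written formula for the pullback $B_{i,j}$ (``$g(x)=f(\tilde{x})$''), whereas the proof of Lemma~\ref{lem:geom-maps} and Example~\ref{ex:braid} are consistent with the opposite convention $j(x)=i(\tilde{x})$ (so the conclusion would be $i(\theta)=i(\theta'')$ and one should invoke $\varphi_4$ rather than $\varphi_3$). This appears to be an inconsistency in the paper rather than an error on your part, and it does not affect the validity of the argument since $\varphi_3$ and $\varphi_4$ are both isomorphisms.
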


More directly, the fact that $\varphi_1$ in Theorem~\ref{thm:isomorphisms} is an embedding tells us that all toroidal cycles in $G_n$ can be identified with an element of $\gim(M_V)$ (or $\gim(M_U)$).
In fact, the construction of $\varphi_1:\Hg_\bullet(G_n)/\Ig^n\inj \gim(M_V)$ can easily be used to recover all toroidal cycles with the aid of Corollary~\ref{cor:general-image}, as illustrated below.
\begin{theorem} \label{thm:recover}
All toroidal cycles of $G_n$ can be recovered from $\gim(M_V)$ independent of $n$.
\end{theorem}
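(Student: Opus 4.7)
The plan is to explicitly invert the injection $\varphi_1:\Hg_\bullet(G_n)/\Ig^n\hookrightarrow\gim(M_V)$ supplied by Theorem~\ref{thm:isomorphisms}, for those $n$ where $\varphi_1$ is an isomorphism. Given $[\theta_0]\in\gim(M_V)$, I construct a chain $\tilde{\alpha}\in C_\bullet(K)$ whose projection $\alpha:=q_n(\tilde{\alpha})$ is a toroidal cycle of $G_n$ with $\psi_0[\alpha]=[\theta_0]$; applying this to a basis of $\gim(M_V)$ then produces representatives of all toroidal classes.

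First I would iterate to obtain $[\theta_\ell]:=(M_V)^\ell[\theta_0]$ for $\ell=0,\ldots,n-1$. For the relevant $n$, the final step in the proof of Theorem~\ref{thm:isomorphisms} shows that $(M_V)^n$ restricts to the identity on $\gim(M_V)$, so we may pick chain representatives $\tilde{\theta}_\ell\in Z_\bullet(V)$ with $\tilde{\theta}_n=\tilde{\theta}_0$. Since the proof of Theorem~\ref{thm:isomorphisms} also shows $\varphi_3=j|_{\gim(M_V)}$ lands in $\gim(M_U)\subseteq\im(i)$, every iterate $[\theta_\ell]$ lies in $j^{-1}(\im(i))$, so the hypothesis of Lemma~\ref{lem:geom-maps}(3) holds. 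Unpacking that lemma at the chain level, each relation $M_V[\theta_\ell]=[\theta_{\ell+1}]$ is witnessed by a chain $\tilde{\beta}_\ell\in C_\bullet(\tr^{\ell+1}(U))$ with $\partial\tilde{\beta}_\ell=\tr^\ell(\tilde{\theta}_\ell)-\tr^{\ell+1}(\tilde{\theta}_{\ell+1})$, using the inclusion-based interpretation of $i$ and $j$.

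Next, set $\tilde{\alpha}:=\sum_{\ell=0}^{n-1}\tilde{\beta}_\ell\in C_\bullet(K)$. The interior terms telescope and leave
\[
\partial\tilde{\alpha}=\tilde{\theta}_0-\tr^n(\tilde{\theta}_n)=\tilde{\theta}_0-\tr^n(\tilde{\theta}_0),
\]
which has exactly the periodic-difference form $\sum c_k(\tilde{\sigma}_k-\tr^{n\cdot m_k}(\tilde{\sigma}_k))$ identified before the proof of Theorem~\ref{thm:isomorphisms} as characterising lifts of toroidal cycles (here with $m_k=1$). Since $q_n\circ\tr^n=q_n$, $\alpha:=q_n(\tilde{\alpha})$ is a cycle in $G_n$, and it is toroidal whenever $[\theta_0]\neq 0$ because the boundary of its lift $\tilde{\alpha}$ is nonzero in $K$. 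Reading $\tilde{\theta}_0$ off as the zeroth $V$-component of the blow-up representative of $\tilde{\alpha}$ then confirms $\psi_0[\alpha]=[\theta_0]$, so this inverts $\varphi_1$.

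The main obstacle will be the chain-level bookkeeping in the second step: the relations $M_V[\theta_\ell]=[\theta_{\ell+1}]$ hold only up to homology, so the $\tilde{\beta}_\ell$ must be chosen compatibly so that the ``internal'' boundary terms really cancel when summing and only the outer terms $\tilde{\theta}_0$ and $\tr^n(\tilde{\theta}_0)$ survive. This amounts to running the Mayer-Vietoris argument from the $\varphi_1$ direction of Theorem~\ref{thm:isomorphisms} in reverse, so the existing machinery—together with the translation dictionary provided by Corollary~\ref{cor:general-image}—already tells us how to make these compatible choices.
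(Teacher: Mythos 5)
Your basic mechanism (pick $[\theta]\in\gim(M_V)$, iterate $M_V$, witness each step by a chain $\tilde{\beta}_\ell$ via Lemma~\ref{lem:geom-maps}(3) and Corollary~\ref{cor:general-image}, and telescope to get a chain $\tilde{\alpha}$ whose boundary has the periodic-difference form) is exactly the right idea and matches the spirit of the paper's construction. However, you restrict from the outset to ``those $n$ where $\varphi_1$ is an isomorphism,'' and your telescoping wraps around exactly once. This misses the non-trivial case. The theorem claims recovery for \emph{every} $n$, and for general $n$ one cannot assume $(M_V)^n$ fixes $[\theta]$; indeed, when $(M_V)^n[\theta]\neq[\theta]$, your single traverse of the $n$ unit cells produces $\partial\tilde{\alpha}=\tilde{\theta}_0-\tr^n(\tilde{\theta}_n)$ with $[\theta_n]\neq[\theta_0]$, and $q_n(\tilde{\alpha})$ is \emph{not} a cycle in $G_n$, so the construction breaks. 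The paper handles this by introducing $m_\theta=\min\{m\in\N : (M_V)^{n\cdot m}[\theta]=[\theta]\}$ (which exists precisely because $[\theta]$ is in the generalized image, hence periodic under $M_V$) and replacing your single sum with a double sum over $\ell=0,\dots,n-1$ and $m=0,\dots,m_\theta$: the lift goes around the $n$ copies of the unit cell $m_\theta$ times before the endpoints agree, producing a toroidal cycle that wraps $m_\theta$ times around the ambient $\mathbb{S}^1$. Your proposal needs this wrapping device to cover arbitrary $n$.

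A secondary point: your assertion that ``the final step in the proof of Theorem~\ref{thm:isomorphisms} shows that $(M_V)^n$ restricts to the identity on $\gim(M_V)$'' is not actually established there. That step only shows $\varphi_1$ is bijective for $n$ a suitable common multiple of wrapping numbers; deducing that $(M_V)^n=\mathrm{id}$ on all of $\gim(M_V)$ would require an additional argument (and in any case the $m_\theta$ device makes this hypothesis unnecessary). The remainder of your sketch --- using $\gim(M_U)\subseteq\im(i)$ to verify the hypotheses of Lemma~\ref{lem:geom-maps}(3), and reading off $\psi_0[\alpha]=[\theta_0]$ from the blow-up representative --- is sound and consistent with the paper.
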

\begin{proof}
Suppose $\alpha$ is a toroidal cycle of $G_n$ and consider the map $\varphi_1$ constructed in Theorem~\ref{thm:isomorphisms}.
Then $\varphi_1[\alpha]:=[\theta]\neq0$ and $M_V^n[\theta]=[\theta]$ by the construction of $\varphi_1$.
Conversely, if $M_V^n[\theta]=[\theta]$ and $[\theta]\neq 0$ then $[\theta]\in\gim(M_V)$, so $\tilde\theta\sim\tr^n(\tilde\theta)$ by Corollary~\ref{cor:general-image}, where we consider $\tilde\theta$ as a cycle in $K$.
Choosing $\tilde{\alpha}_\theta$ so that $\D\tilde{\alpha}_\theta=\tr^n(\tilde{\theta})-\tilde{\theta}$ means that $\alpha_\theta:=q_n(\tilde{\alpha}_\theta)$ will be a toroidal cycle in $G_n$ -- where we observe that $\alpha_\theta$ is a cycle and $\varphi_1[\alpha_\theta]=[\theta]\in\gim(M_V)$ by construction.
In particular, if we set $[\theta_\ell]:=M_V^\ell[\theta]$ for $\ell\geq0$, then there exists $\gamma_\ell$ so that $\D\gamma_\ell=\tr(\theta_{\ell+1})-\tr(\theta_\ell)$, and we may write
\[
\alpha_\theta=\sum_{\ell=0}^{n-1}\tr^\ell(\gamma_\ell)
\]
modulo the addition of non-toroidal cycles of $G_n$.
Therefore, all toroidal cycles in $G_n$ can be constructed in this way and will correspond to non-zero elements of the $1$-eigenspace of $M_V^n$, which is a subspace of $\gim(M_V)$ for each $n$.
\end{proof}
If $N_U$ is the number of cells in $U$ then $\Hg_\bullet(U)$, $\Hg_\bullet(V)$, $i,j$ and $M_V$ can all be calculated in $O((N_U)^\omega)$ time.
Therefore, one can calculate $\gim(M_V)$ and hence entirely classify toroidal cycles in $O((N_U)^\omega\log^2 N_U)$ time.

\section{Persistence} \label{sec:persistence}
Suppose that $K$ is now a filtered 1-periodic cellular complex.
This means that a cell $\sigma$ is born if and only if $\mathbf{t}(\sigma)$ is born simultaneously for every $\mathbf{t}\in T$.
Equivalently, given $K\surj G_1$, every filtration on $G_1$ admits a filtration on $K$ where a cell is born if and only if the corresponding equivalence class in $G_1$ is born.
We then have the following commutative diagram
\[
\begin{tikzcd}
	& \vdots & \vdots & \vdots & \vdots & \\
	\cdots & {\Hg_\bullet(U_{m+2})} & {\Hg_\bullet(V_{m+2})} & {\Hg_\bullet(U_{m+2})} & {\Hg_\bullet(V_{m+2})} & \cdots \\
	\cdots & {\Hg_\bullet(U_{m+1})} & {\Hg_\bullet(V_{m+1})} & {\Hg_\bullet(U_{m+1})} & {\Hg_\bullet(V_{m+1})} & \cdots \\
	\cdots & {\Hg_\bullet(U_m)} & {\Hg_\bullet(V_m)} & {\Hg_\bullet(U_m)} & {\Hg_\bullet(V_m)} & \cdots \\
	\cdots & {\Hg_\bullet(U_{m-1})} & {\Hg_\bullet(V_{m-1})} & {\Hg_\bullet(U_{m-1})} & {\Hg_\bullet(V_{m-1})} & \cdots \\
	& \vdots & \vdots & \vdots & \vdots & 
	\arrow[from=6-2, to=5-2]
	\arrow[from=5-2, to=4-2]
	\arrow[from=4-2, to=3-2]
	\arrow[from=3-2, to=2-2]
	\arrow[from=3-3, to=2-3]
	\arrow[from=4-3, to=3-3]
	\arrow[from=5-3, to=4-3]
	\arrow[from=6-3, to=5-3]
	\arrow[from=6-4, to=5-4]
	\arrow[from=5-4, to=4-4]
	\arrow[from=4-4, to=3-4]
	\arrow[from=3-4, to=2-4]
	\arrow[from=6-5, to=5-5]
	\arrow[from=5-5, to=4-5]
	\arrow[from=4-5, to=3-5]
	\arrow[from=3-5, to=2-5]
	\arrow[from=4-5, to=4-6]
	\arrow[from=5-5, to=5-6]
	\arrow[from=5-5, to=5-4]
	\arrow[from=5-3, to=5-4]
	\arrow[from=5-3, to=5-2]
	\arrow[from=5-1, to=5-2]
	\arrow[from=4-1, to=4-2]
	\arrow[from=4-3, to=4-2]
	\arrow[from=4-3, to=4-4]
	\arrow[from=4-5, to=4-4]
	\arrow[from=3-5, to=3-4]
	\arrow[from=3-5, to=3-6]
	\arrow[from=3-3, to=3-4]
	\arrow[from=3-3, to=3-2]
	\arrow[from=3-1, to=3-2]
	\arrow[from=2-1, to=2-2]
	\arrow[from=2-2, to=1-2]
	\arrow[from=2-3, to=1-3]
	\arrow[from=2-3, to=2-2]
	\arrow[from=2-3, to=2-4]
	\arrow[from=2-5, to=2-4]
	\arrow[from=2-5, to=2-6]
	\arrow[from=2-5, to=1-5]
	\arrow[from=2-4, to=1-4]
\end{tikzcd}
\]
Along each row we construct the same maps, $M_{V_m}$ and $M_{U_m}$, however these will not necessarily commute with the vertical maps.
This is due to the choice of $\mathcal{J}_m$ for the decomposition $\Hg_\bullet(V_m)=j_m^{-1}(\im(i_m))\oplus \mathcal{J}_m$, not being functorial.
That is, there is in general no consistent basis such that the following maps commute and in particular commutativity breaks at the arrow struck out below.
\[
\begin{tikzcd}
        \Hg_\bullet(V_m) & \Hg_\bullet(V_{m+1}) \\
        \mathcal{J}_m & \mathcal{J}_{m+1}  \\
	\arrow[from=1-1, to=1-2]
	\arrow[red, "/"{anchor=center,sloped}, from=2-1, to=2-2]
	\arrow[hook', from=2-1, to=1-1]
	\arrow[hook, from=2-2, to=1-2]
\end{tikzcd}
\]
In the example below, we see a simple case where the functoriality of $M_{V_m}$ and $M_{U_m}$ fails.

\begin{example} \label{ex:persistence}
\normalfont
Let $K$ be as in Example~\ref{ex:toroidal}.
We define a filtration on $U$ as follows
\[
\begin{tikzpicture}
\foreach \i in {0,...,6}
    {
    \node[shape=circle,fill=black,inner sep=0pt,minimum size=0.15cm] (\i+1,1,1) at (2*\i,1) { };
    \node[shape=circle,fill=black,inner sep=0pt,minimum size=0.15cm] (\i+1,1,2) at (0.5 + 2*\i,1) { };
    \ifthenelse{\i<6}{\node (\i+1,arrow) at (1.25 + 2*\i,0.5) {$\rightarrow$};}{ }

    \ifthenelse{\i>0}{\node[shape=circle,fill=black,inner sep=0pt,minimum size=0.15cm] (\i+1,2,1) at (2*\i,0.5) { };}{ }
    \ifthenelse{\i>0}{\node[shape=circle,fill=black,inner sep=0pt,minimum size=0.15cm] (\i+1,2,2) at (0.5+2*\i,0.5) { };}{ }
     
    \ifthenelse{\i>2}{\node[shape=circle,fill=black,inner sep=0pt,minimum size=0.15cm] (\i+1,3,1) at (2*\i,0) { };}{ }
    \ifthenelse{\i>2}{\node[shape=circle,fill=black,inner sep=0pt,minimum size=0.15cm] (\i+1,3,2) at (0.5+2*\i,0) { };}{ }
            
    \ifthenelse{\i>1}{\draw (2*\i,1) to (0.5+2*\i,0.5);}{ }
            
    \ifthenelse{\i>3}{\draw (2*\i,0.5) to (0.5+2*\i,0);}{ }
            
    \ifthenelse{\i>4}{\draw[gray] (2*\i,0) to (0.5+2*\i,1);}{ }
            
    \ifthenelse{\i>5}{\draw (2*\i,1) to (0.5+2*\i,1);}{ }
    }
\end{tikzpicture}
\]
which induces the following filtration on $V$
\[
\begin{tikzpicture}
\foreach \i in {0,...,6}
    {
    	\node[shape=circle,fill=black,inner sep=0pt,minimum size=0.15cm] (\i,1) at (2*\i,1) {};
     
    	\ifthenelse{\i>0}{\node[shape=circle,fill=black,inner sep=0pt,minimum size=0.15cm] (\i,2) at (2*\i,0.5) {};}{}
     
    	\ifthenelse{\i>2}{\node[shape=circle,fill=black,inner sep=0pt,minimum size=0.15cm] (\i,3) at (2*\i,0) {};}{}
     
    	\ifthenelse{\i<6}{\node (\i+1,arrow) at (1 + 2*\i,0.5) {$\rightarrow$};}{}
    }
\end{tikzpicture}
\]
Giving $\Hg_\bullet(V)$ the obvious basis, with $\F_2$ coefficients, the persistence maps $\iota_{ij}$ are given by
\[
\iota_{ij} \equiv \begin{cases}\hfil
\begin{pmatrix}
1 \\ 0
\end{pmatrix} \hfil& i=1,\; j=2,3 \\ \hfil
\begin{pmatrix}
1 \\ 0 \\ 0
\end{pmatrix} \hfil& i=1,\; j>3 \\
\begin{pmatrix}
1 & 0 \\ 0 & 1 \\ 0 & 0
\end{pmatrix} & i=2,3,\; j>3 \\\hfil
\mathrm{id}\hfil & \text{otherwise}
\end{cases}.
\]
We also have $M_{V_1}\equiv M_{V_2}\equiv 0$ and 
\begin{align*}
M_{V_3} \equiv \begin{pmatrix}
0 & 0 \\
1 & 0
\end{pmatrix}&\;\;\;\;
M_{V_4} \equiv \begin{pmatrix}
0 & 0 & 0 \\
1 & 0 & 0 \\
0 & 0 & 0
\end{pmatrix}\;\;\;\;
M_{V_5} \equiv \begin{pmatrix}
0 & 0 & 0 \\
1 & 0 & 0 \\
0 & 1 & 0
\end{pmatrix}\\
M_{V_6} \equiv &\begin{pmatrix}
0 & 0 & 1 \\
1 & 0 & 0 \\
0 & 1 & 0
\end{pmatrix}\;\;\;\;
M_{V_7} \equiv \begin{pmatrix}
1 & 0 & 1 \\
0 & 0 & 0 \\
0 & 1 & 0
\end{pmatrix}.
\end{align*}
One easily notices that $\iota_{23}\circ M_{V_2} = 0$, but $M_{V_3}\circ \iota_{23}=M_{V_3}\neq 0$, for instance.
More generally, $M_{V_i}$ and $M_{V_j}$ will only commute with $\iota_{i,j}$ if either $i=j$, $(i,j)=(1,2)$ or $(i,j)=(3,4)$.
\end{example}

While $\iota_{ij}\circ M_{V_i}\neq M_{V_j}\circ \iota_{ij}$ in general, we saw in Lemma~\ref{lem:geom-maps} and Corollary~\ref{cor:general-image} that the $M_{V_i}$ at least respect translational symmetries, which are also respected by persistence.
Thus we at least still expect these endomorphisms to respect the persistent homology of $\Hg_\bullet(G_n)$ in some way.
In particular, once we introduce a toroidal cycle in the filtration we expect at least one toroidal cycle to persist throughout the filtration since $G_n$ is embedded in a space homotopic to $\mathbb{S}^1$.
In this sense, a toroidal cycle can only die if it combines with another toroidal cycle, in which case we often gain non-toroidal cycles.
This intuition indeed holds true and can be made more precise as a form of unimodality.

\begin{theorem} \label{thm:unimodal}
Suppose $\{G_k\}$ is a filtration of the quotient space $G$ which induces filtrations $\{U_k\}$ and $\{V_k\}$ on $U$ and $V$ respectively.
Then $\{M_{U_k}\}$ and $\{M_{V_k}\}$ are unimodal in the sense that for all $k\leq \ell\leq m$,
\begin{align*}
\gker(M_{U_\ell})\cap\gim(M_{U_k}) \;\;&\subseteq\;\;\gker(M_{U_m})\cap\gim(M_{U_k}) \\
\gker(M_{V_\ell})\cap\gim(M_{V_k}) \;\;&\subseteq\;\;\gker(M_{V_m})\cap\gim(M_{V_k}).
\end{align*}
\end{theorem}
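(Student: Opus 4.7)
The plan is to exploit the geometric interpretation of $\gim$ and $\gker$ provided by Lemma~\ref{lem:geom-maps} and Corollary~\ref{cor:general-image}, combined with the fundamental observation that the chain complexes underlying the construction are naturally monotone in the filtration. Informally, an element of $\gker(M_{V_\ell})\cap\gim(M_{V_k})$ represents a class whose ``toroidal content'' at level $k$ has been annihilated by level $\ell$---some iterated translate of $\alpha$ has become a boundary in $U_\ell$---and since boundaries at level $\ell$ remain boundaries at level $m\geq\ell$, this annihilation persists.

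More precisely, fix $[\alpha]\in\gim(M_{V_k})$ with $\iota_{k\ell}[\alpha]\in\gker(M_{V_\ell})$ and let $N$ be minimal so that $(M_{V_\ell})^N\iota_{k\ell}[\alpha]=0$. Writing $[\theta_i]=(M_{V_\ell})^i\iota_{k\ell}[\alpha]$ and unpacking the defining pairing $\mathcal{B}_\ell$ at each step yields chain-level relations of the form $\tr^{-1}(\theta_i^\pi)-\theta_{i+1}=\partial c_i$ in $U_\ell$, where $\theta_i^\pi$ is the $\pi_1(\mathcal{B}_\ell)$-component of $\theta_i$ and $c_i\in C_\bullet(U_\ell)$. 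Telescoping these relations over $i=0,\ldots,N-1$ (after translating the $i$-th identity by $\tr^{-i}$) assembles a chain $c\in C_\bullet\bigl(\bigcup_{j=0}^{N}\tr^{-j}(U_\ell)\bigr)$ whose boundary recovers $\iota_{k\ell}(\alpha)$ modulo corrections concentrated in $\bigoplus_i\tr^{-i}(\ker(j_\ell)\oplus\mathcal{J}_\ell)$. These corrections represent cycles that are either bounded in $U_\ell$ (for the $\ker(j_\ell)$-part, where the witnessing chain can be absorbed into $c_i$) or can be iteratively accounted for by running the same unpacking one step further, producing an honest bounding of $\iota_{k\ell}(\alpha)$ in a slightly enlarged translational window.

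The hypothesis $V_k\hookrightarrow V_\ell$ (and hence $V_\ell\hookrightarrow V_m$ and $U_\ell\hookrightarrow U_m$) guarantees that $\bigcup_{j}\tr^{-j}(U_\ell)\hookrightarrow\bigcup_{j}\tr^{-j}(U_m)$, so the chain $c$ remains valid at filtration $m$ and continues to witness the bounding of $\iota_{km}(\alpha)$ in the level-$m$ window. Reversing the telescoping at level $m$---choosing representatives compatible with the level-$m$ decomposition $\pi_1(\mathcal{B}_m)\oplus\ker(j_m)\oplus\mathcal{J}_m$---produces a sequence of $M_{V_m}$-iterates terminating at zero, so $\iota_{km}[\alpha]\in\gker(M_{V_m})$. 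The argument for $M_{U_k}$ is parallel, with the roles of $\mathcal{B}$ and $\tilde{\mathcal{B}}$, of $i$ and $j$, and of the $V$- and $U$-translational windows interchanged.

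The principal obstacle is the non-functoriality of the decomposition $\Hg_\bullet(V)=\pi_1(\mathcal{B})\oplus\ker(j)\oplus\mathcal{J}$ under the filtration inclusion, illustrated explicitly in Example~\ref{ex:persistence}: the algebraic form of $M_{V_\ell}$ and $M_{V_m}$ can genuinely differ on the same cycle, so one cannot directly transport algebraic iterates across the filtration. The argument must instead pass through the chain-level bounding statement (which is intrinsically functorial) and then rebuild a possibly different $M_{V_m}$-sequence at level $m$. Carefully managing the ``degenerate'' $\ker(j_\ell)\oplus\mathcal{J}_\ell$ corrections that appear at each telescoping step---ensuring they neither obstruct the assembly of $c$ nor its re-interpretation at level $m$---is where the bulk of the technical care resides.
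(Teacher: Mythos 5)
There is a genuine gap, and it sits exactly where you locate ``the bulk of the technical care'': the handling of the $\ker(j_\ell)\oplus\mathcal{J}_\ell$ corrections. Your telescoping treats membership in $\gker(M_{V_\ell})$ as if it were always witnessed by chain-level boundings (``some iterated translate of $\alpha$ has become a boundary''). That is true for the $\ker(j_\ell)$ summand, but not for $\mathcal{J}_\ell=j_\ell^{-1}(\im(i_\ell))^\perp$: there $M_{V_\ell}$ is \emph{defined} to be zero because $j_\ell(x)\notin\im(i_\ell)$, a failure of homology classes of $U_\ell$ to match across the window, for which there is no bounding chain to extract. ``Running the unpacking one step further'' does not help, since $M_{V_\ell}$ kills these components by fiat rather than by geometry. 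The telescoped identity therefore only yields $\iota_{k\ell}(\theta)$ homologous to a sum of translates of $\mathcal{J}_\ell$-classes, not an honest bounding in a finite window. Worse, this mechanism of death is precisely the one that is \emph{not} monotone in the filtration: in Example~\ref{ex:persistence}, $M_{V_2}=0$ while $M_{V_3}\neq 0$, because a class with $j_\ell(x)\notin\im(i_\ell)$ can acquire $j_m(\iota_{\ell m}(x))\in\im(i_m)$ once more cells appear. What rescues the theorem is the hypothesis $[\theta]\in\gim(M_{V_k})$, and your sketch never uses it to control these corrections.

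A second, related gap is the ``reverse telescoping'' at level $m$: even granting that $\iota_{km}(\theta)$ bounds in a finite union of translates of $U_m$, concluding that its $\gim(M_{V_m})$-component vanishes is exactly the injectivity half of Theorem~\ref{thm:isomorphisms} (nonzero elements of $\gim(M_{V_m})$ represent genuinely toroidal classes, which cannot bound in $K_m$); you cannot rebuild an $M_{V_m}$-iterate sequence by hand, for the very non-functoriality reason you flag. The paper's proof sidesteps both problems by routing everything through Theorems~\ref{thm:isomorphisms} and~\ref{thm:recover}: it identifies $\gim(M_{V_l})$ with $\Hg_\bullet(G_{n})/\Ig^n$ at each level (for suitable $n$), associates to $[\theta]$ toroidal cycles $\alpha_k$ and $\alpha_m$ differing by a non-toroidal class, and then uses only the trivially monotone fact that a lift to a cycle of $K_\ell$ remains a cycle of $K_m$. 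To salvage your chain-level route you would first need to show that for $[\theta]\in\gim(M_{V_k})$ the only way $\iota_{k\ell}[\theta]$ can enter $\gker(M_{V_\ell})$ is via an eventual bounding of the associated toroidal cycle in $K_\ell$ --- which amounts to re-proving the theorems you are trying to bypass.
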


\begin{proof}
We prove the second inclusion, and then a similar argument also proves the first.
We proceed as follows.
First, we apply Corollary~\ref{cor:general-image} to show that elements of $\gker(M_{V_\ell})\cap\gim(M_{V_k})$ are homologous to their images under $M_{V_t}$ for any parameter $t\geq k$.
We then show that if $[\theta]\in \gker(M_{V_\ell})\cap\gim(M_{V_k})$, then the difference $M_{V_m}[\theta]-M_{V_\ell}[\theta]$ lies in $\gker(M_{V_m})$, which we use to complete the proof.

Suppose $[\theta]\in\gker(M_{V_\ell})\cap\gim(M_{V_k})$.
By Corollary~\ref{cor:general-image}, for every $l$ this means $\theta\sim\tr^l(\theta_l)$ in $G_n$, where $M_{V_k}^l[\theta]:=[\theta_l]$, and in particular $j_k[\theta_l]=i_k[\theta_{l+1}]$ in $\Hg_\bullet(U_k)$.
Now for $s\leq t$, set $\iota_{st}:\Hg_\bullet(G_s)\to\Hg_\bullet(G_t)$ to be the  map  induced by the inclusion $G_s\inj G_t$ in the filtration on $G$.
For $t\geq k$ we have $i_t\circ\iota_{kt}=\iota_{kt}\circ i_k$ and $j_t\circ\iota_{kt}=\iota_{kt}\circ j_k$, so $j_t[\theta_l]=i_t[\theta_{l+1}]$ for each $l$ as well.
Now recall that $\Hg_\bullet(V_t)=j_t^{-1}(\im(i_t))\oplus\mathcal{J}_t$, which leads to the construction of $M_{V_t}$ in Lemma~\ref{lem:MV} where $\ker(M_{V_t})=\mathcal{J}_t$, and $j_t(x)=i_t\circ M_{V_t}(x)$ for each $x\in j_t^{-1}(\im(i_t))$.
This means $M_{V_t}[\theta_l]-[\theta_{l+1}]\in\ker(i_t)$ for each $l$ and each $t\geq k$.
Note that the requirement that $j_k[\theta_l]=i_k[\theta_{l+1}]$ is key here, as otherwise these differences may lie in $\mathcal{J}_t$.
This means $\Lambda_l:=M_{V_\ell}[\theta_l]-[\theta_{l+1}]\in\ker(i_\ell)$ and $\Delta_l:= M_{V_m}[\theta_l]-[\theta_{l+1}]\in\ker(i_m)$.
Then observe
\begin{align*}
M_{V_m}[\theta_l] & = [\theta_{l+1}] + \Delta_l\\
M_{V_\ell}[\theta_l] & = [\theta_{l+1}] + \Lambda_l\\
M_{V_m}^{r+1}[\theta_l] & = M_{V_m}^r\circ M_{V_m}[\theta_l] = M_{V_m}^r[\theta_{l+1}] + M_{V_m}^r(\Delta_l) \\
M_{V_\ell}^{r+1}[\theta_l] & = M_{V_\ell}^r\circ M_{V_\ell}[\theta_l] = M_{V_\ell}^r[\theta_{l+1}] + M_{V_\ell}^r(\Lambda_l)
\end{align*}
and so by induction
\begin{align*}
M_{V_m}^N[\theta] & = [\theta_{N}] + \sum_{l=0}^{N-1}M_{V_m}^{N-l-1}\left(\Delta_l\right) \\
M_{V_\ell}^N[\theta] & = [\theta_{N}] + \sum_{l=0}^{N-1}M_{V_m}^{N-l-1}\left(\Lambda_l\right)
\end{align*}
which combine to give 
\[
M_{V_m}^N[\theta]=M_{V_\ell}^N[\theta]+\sum_{l=0}^{N-1}M_{V_m}^{N-l-1}\left(\Delta_l-\Lambda_l\right).
\]

Now we recall that $[\theta]\in\gker(M_{V_\ell})$ by assumption, so $M_{V_\ell}^N[\theta]=0$ for sufficiently large $N$.
To show that $[\theta]\in\gker(M_{V_m})$ (and hence complete the proof), it suffices to show that $\Delta_l-\Lambda_l\in\gker(M_{V_m})$ for each $l$.
Indeed, $x\in\gker(M_{V_m})$ if and only if $M_V^r(x)\in\gker(M_{V_{V_m}})$ for any $r>0$, so if $\Delta_l-\Lambda_l\in\gker(M_{V_m})$ then $\sum_{l=0}^{N-1}M_{V_m}^{N-l-1}\left(\Delta_l-\Lambda_l\right)\in\gker(M_{V_m})$.
So for $N$ sufficiently large that $M_{V_\ell}^N[\theta]=0$, this means $M_{V_m}^N[\theta]\in \gker(M_{V_{V_m}})$ and hence $[\theta]\in\gker(M_{V_m})$.

To verify that $\Delta_l-\Lambda_l\in\gker(M_{V_m})$, we first recall that $\Delta_l\in\ker(i_m)$ and $\Lambda_l\in\ker(i_\ell)$.
But $\iota_{\ell m}\circ i_\ell = i_m\circ \iota_{\ell m}$, so $\im(i_\ell)\subseteq \im(i_m)$ and hence $\Delta_l-\Lambda_l\in\ker(i_m)$.
Thus, it remains to show that $\ker(i_m)\subseteq\gker(M_{V_m})$.
For this purpose, recall from Remark~\ref{rem:zig-zag} that $M_{V_m}$, $M_{U_m}$ (and similarly $\tilde{M}_{V_m}$ and $\tilde{M}_{V_m}$) can be constructed from the interval decomposition of zig-zag persistence.
Under this construction, we identify $\ker(i_m)$ with the span of $I[0,b](0)$ for all barcodes of the form $I[0,b]$ for $0\leq b\leq \infty$.
However, this zig-zag persistence module is the lift of a circular-valued zig-zag module, and therefore any infinite-length barcode must be doubly infinite (c.f. \cite[Equation~3]{burghelea2017topology} or \cite[Equation~1]{fersztand2024harder}).
Therefore, we may identify $\ker(i_m)$ with the span of $I[0,b](0)$ for all \emph{finite} barcodes of the form $I[0,b]$.
There must be finitely many such barcodes, as otherwise $\Hg_\bullet(V)\supseteq\bigoplus I[0,b]$ is infinite dimensional, which contradicts the assumption that $K_m$ is locally compact and paracompact.
Then setting $B<\infty$ to be the maximum value of $b$ for all barcodes of the form $I[0,b]$, we must have $\ker(i_m)\subseteq\ker(M_{V_m}^B)\subseteq\gker(M_{V_m})$.

\end{proof}

\section{Discussion}
We have introduced a new method for calculating the homology of $1$-periodic cellular complexes from local, finite data. In particular, one can study homology classes locally through two endomorphisms, $M_U$ and $M_V$, 
which encode translational symmetries and information about the zig-zag persistence of the complex.
This allows us to entirely classify toroidal and non-toroidal cycles of quotient spaces and recover the entire homology of a periodic complex from this information.

To the authors' knowledge, this constitutes the first computationally viable method of calculating the homology of periodic cellular complexes of dimensions greater than 1, though a different approach has been announced in \cite{Heiss-AATRN}. While a full persistence theory remains out of reach, the endomorphisms satisfy a form of unimodality on filtered periodic cellular complexes which provides information about the persistence of toroidal cycles, representing progress in understanding the persistent homology of such spaces.

However, the need to choose bases in defining these endomorphisms provides an obvious limitation for this approach.
As shown in Example~\ref{ex:persistence}, $M_V$ does not commute with inclusion maps of a filtered complex, so we are, in general,  unable to completely and efficiently represent and compute persistent homology in this setting with this approach.
Similarly, this approach does not extend to $d$-periodic complexes for $d>1$ to even classify toroidal and non-toroidal cycles in homology (e.g. without introducing persistence).
Nonetheless, interesting open questions and directions arise from these results.
\begin{enumerate}
\item Though a complete characterization of persistent homology in this setting is unlikely, how can unimodality be exploited for understanding the behavior or these systems?
\item How much information is retained/lost if a $d$-periodic complex is studied by its component 1-periodic complexes?
\item While endomorphisms have been widely studied, in applied topology as well as other areas, is there an overarching framework to understand this setting from the perspective of persistent homology?
\end{enumerate}

\section*{Acknowledgements}\label{ackref}
A.O.~acknowledges the support of the Additional Funding Programme for Mathematical Sciences, delivered by EPSRC (EP/V521917/1) and the Heilbronn Institute for Mathematical Research. The authors would also like to thank Vanessa Robins for many helpful discussions in the overlying goal of computing the persistent homology of periodic point sets.

\bibliographystyle{apalike}
\bibliography{refs}

\appendix

\section{The Mayer-Vietoris Spectral Sequence} \label{sec:MVSS}
In this Appendix we define and summarise machinery to do with the Mayer-Vietoris Spectral Sequence used in the proof of Theorem~\ref{thm:isomorphisms}.
For a more formal introduction we direct the reader to \cite{onus2022quantifying} and for further details we direct the reader to \cite{brown,lewis2014multicore,stafa2015mayer}.

Let $K$ be a cellular complex and let $\U=\{U_{i\in I}\}$ be a cover of $K$ by subcomplexes such that only finite intersections of sets in $\U$ may be non-empty.
Let $\nerve$ denote the simplicial complex defined by the nerve of the cover $\U$. The \textit{blow-up} complex of $K$ with respect to $\U$ is the $\Z$-bigraded module $E^0=\{E^0_{p,q}\}_{p,q\in\Z}$, where $E^0_{p,q}$ is a subspace of the tensor product $C_p(\nerve)\otimes C_q(K)$ generated by elements of the form $J\otimes \gamma$ with $\gamma\in C_q\left(\cap_{j\in J}U_j\right)$
\[
E^0_{p,q}:= \langle J\otimes \gamma\,:\,\gamma\in C_q\left(\cap_{j\in J}U_j\right),\,|J|=p+1\rangle .
\]
Here, we slightly abuse notation by writing $J$ in $J\otimes \gamma$ to denote the $p$-simplex in the nerve of $\U$ representing the non-empty intersection $\cap_{j\in J}U_j$.
$E^0$ is equipped with the maps $\D^0_{p,q}:E^0_{p,q}\to E^0_{p,q-1}$ and $\D^1_{p,q}:E^0_{p,q}\to E^0_{p-1,q}$ induced by the boundary maps on $K$ and nerve of $\U$ respectively.
The maps $\D^0$ and $\D^1$ satisfy $(\D^0)^2= 0, (\D^1)^2=0, \D^0\D^1+\D^1\D^0=0$, which makes the blow-up complex $(E^0,\D^0,\D^1)$ a \textit{bi-complex}.
Bi-complexes are special because their diagonals define a graded complex, called the \textit{total complex}.
Given $K$ and $\U$, the associated total complex is denoted $(T_{\U,K},\D)$, where $(T_{\U,K})_k:=\bigoplus_{p+q=k}E^0_{p,q}$ and $\D^2=(\D^0+\D^1)^2=0$.

A \textit{spectral sequence} $(E^r,d^r)$ is a collection of $\Z$-bigraded modules $E^r$ and differential maps $d^r$ with the property that $H(E^r,d^r)=E^{r+1}$ (that is, the homology with respect to $d^r$ of $E^r$ determines $E^{r+1}$).
We say that $E^r$ is the $r^\mathrm{th}$-\textit{page} of the spectral sequence.
If for each $p,q\in\Z$ there exists an $r_{p,q}$ such that $E^r_{p,q}\cong E^{r_{p,q}}_{p,q}$ for $r \geq r_{p,q}$, then we define $E^\infty=\{E^{r_{p,q}}_{p,q}\}_{p,q\in\Z}$ to be the $\infty$-\textit{page} of the spectral sequence and say that $(E^r,d^r)$ converges to $E^\infty$.

Every bi-complex induces a spectral sequence.
In particular, for $K$ and $\U$, the total complex describes the 0th page of a spectral sequence called the \textit{Mayer-Vietoris spectral sequence} (MVSS).
The Mayer-Vietoris spectral sequence is so named because it generalises the standard exact sequence to covers with more than two elements. 
We can determine the homology of $K$ from the diagonals of $E^\infty$ or from $E^0$ taken as the total complex (i.e. the diagonals of $E^0$) by a result of \cite{godement1958topologie}.
\begin{theorem} \label{thm:mvss}
For homology over a field $\F$,
\[
\Hg_k(K)\cong \Hg_k(T_{\U,K},\D)\cong\bigoplus_{p+q=k}E^\infty_{p,q}.
\]
\end{theorem}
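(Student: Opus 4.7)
The plan is to establish both isomorphisms by exploiting the fact that the bicomplex $(E^0, \D^0, \D^1)$ admits two natural filtrations—by columns and by rows—each inducing a spectral sequence that converges to the homology of the total complex. The column-first filtration is precisely the Mayer--Vietoris spectral sequence and gives the second isomorphism by definition; the row-first filtration collapses at $E^2$ to the homology of $K$ and gives the first isomorphism.

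First I would verify convergence. Because the cover $\U$ is locally finite (only finitely many non-empty intersections at any point) and $K$ is cellular, for each total degree $n$ only finitely many terms $E^0_{p,q}$ with $p+q=n$ contribute in any given filtration stratum. Standard convergence theorems for filtered bicomplexes (e.g.\ bounded or half-plane filtrations) then imply that both spectral sequences converge strongly to $\Hg_\bullet(T_{\U,K}, \D)$. The second isomorphism $\Hg_k(T_{\U,K}, \D)\cong\bigoplus_{p+q=k} E^\infty_{p,q}$ is then immediate from applying this convergence to the column-first filtration, which is by construction the MVSS as defined in the appendix.

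The bulk of the work is the first isomorphism. Here I would use the row-first filtration, so that the $E^0$-differential in the associated spectral sequence is $\D^1$ and the $E^1$-differential is induced by $\D^0$. The crucial claim is that each row is exact except at $p=0$, where its homology is $C_q(K)$; i.e.\ for each fixed $q$,
\[
\cdots \to \bigoplus_{|J|=p+1} C_q\left(\cap_{j\in J} U_j\right) \xrightarrow{\D^1} \bigoplus_{|J|=p} C_q\left(\cap_{j\in J} U_j\right) \to \cdots \to \bigoplus_{|J|=1} C_q(U_j) \to C_q(K) \to 0
\]
is exact. To prove this, I would decompose the row complex as a direct sum over cells $\sigma$ of $K$: letting $J_\sigma := \{i \in I : \sigma \in U_i\}$, the $\sigma$-summand in degree $p$ is spanned by $\{J \otimes \sigma : J \subseteq J_\sigma,\ |J|=p+1\}$, and $\D^1$ restricts to the simplicial boundary on the full simplex with vertex set $J_\sigma$. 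Since $\U$ covers $K$, $J_\sigma$ is nonempty, so this simplex is contractible and its augmented chain complex is acyclic. Summing over $\sigma$ gives exactness of the row except at $p=0$, where the cokernel is exactly $C_q(K)$. Consequently the $E^1$-page is concentrated in column $0$ with $E^1_{0,q}=C_q(K)$, and unwinding the definitions shows that the induced $\D^0$-differential there is the cellular boundary on $K$. The spectral sequence therefore collapses at $E^2$ with $E^2_{0,q}=\Hg_q(K)$, establishing $\Hg_k(K)\cong\Hg_k(T_{\U,K},\D)$.

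The main obstacle is the chain-level exactness of the rows, which depends essentially on $\U$ being a cover by \emph{subcomplexes} rather than an arbitrary open cover (this is what allows the direct-sum decomposition over cells and the identification of each summand with the chain complex of a full simplex). A secondary subtlety is verifying that the $E^1$-differential in the row-first spectral sequence genuinely reduces to the cellular boundary of $K$ under the identification $E^1_{0,q}\cong C_q(K)$; this follows from the graded commutation $\D^0\D^1+\D^1\D^0=0$ and tracking a representative through the snake-lemma construction of $d^1$, but should be spelled out carefully to avoid sign errors.
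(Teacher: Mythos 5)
Your proof is correct: the two-filtration argument on the first-quadrant double complex, with row-exactness established by decomposing each row into the augmented chain complexes of the full simplices on $J_\sigma=\{i: \sigma\in U_i\}$ (nonempty, and finite by the paper's hypothesis on $\U$), is the standard proof of this result. The paper does not prove the theorem itself but cites Godement for it; your argument is precisely the one underlying that citation, so there is nothing to compare beyond noting that you have supplied the details the paper outsources.
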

\begin{remark}
Theorem~\ref{thm:mvss} does not require any finiteness assumptions on $K$.
To see this for the first isomorphism, we direct the reader to \cite[Section~2]{dugger2004topological}, noting that we may equivalently consider $\U$ an open cover of $K$ since every $U\in\U$ has an open neighbourhood onto which it deformation retracts by \cite[Proposition~A.5]{hatcher}.
For the second isomorphism, we direct the reader to \cite[Section~2.2]{mccleary2001user} and in particular Theorem~2.6, where the isomorphism follows by a column filtration of the total complex, noting that all short exact sequences of (non-topological, but possibly infinite dimensional) vector fields are split exact by an application of Zorn's Lemma.
\end{remark}
The isomorphisms in Theorem~\ref{thm:mvss} are explicitly defined 
as follows\footnote{It is a standard graduate level course exercise to show that both maps are well-defined isomorphisms, but for completeness we still make the maps explicit.}.
To begin, suppose $(\alpha_0,\dots,\alpha_k)$ represents an element of $\Hg_k(T_{\U,K},\D)$, where $\alpha_p\in E^0_{p,k-p}$ for $p=0,\dots,k$.
If $\U=\{U_i, i\in\mathcal{I}\}$ then, then $\alpha_0$ decomposes to the sum $\sum_{i\in\mathcal{I}}(\{i\}\otimes\gamma_i)$.
Writing $\alpha = \sum_{i\in \mathcal{I}}\gamma_i$ in $K$,
Then the isomorphism $\Hg_k(K)\cong \Hg_k(T_{\U,K},\D)$ maps $[(\alpha_0,\dots,\alpha_k)]\in\Hg_k(T_{\U,K},\D)$ to $[\alpha]\in \Hg_k(K)$.
This is summarised by the first equivalence in Equation~\ref{eq:correspondence}
Then the second isomorphism $\Hg_k(T_{\U,K},\D)\cong\bigoplus_{p+q=k}E^\infty_{p,q}$ comes from a standard column filtration of the total complex.
Explicitly, $E^\infty_{p,k-p}$ represents elements of $\Hg_k(T_{\U,K},\D^0+\D^1)$ with representatives in $\bigoplus_{i=0}^pE^0_{i,k-i}\subseteq T_{\U,K}$ which do not have representatives in $\bigoplus_{i=0}^{p-1}E^0_{i,k-i}$.
To this end, for each $p=0,\dots,k$ and each non-trivial $[\alpha_p]\in E^\infty_{p,k-p}$, there exists $[(\alpha_0,\dots,\alpha_p,0,\dots,0)]\in \Hg_k(T_\U,\D^0+\D^1)$
This is summarised by the second (reverse) implication in Equation~\ref{eq:correspondence}.
\begin{equation} \label{eq:correspondence}
[\alpha] \in \Hg_k(K) \;\Longleftrightarrow\; \exists\, [(\alpha_0,\dots,\alpha_p,0,\dots,0)]\in \Hg_k(T_{\U,K},\D^0+\D^1) \;\Longleftarrow\;[\alpha_p]\in E^\infty_{p,k-p}
\end{equation}

\end{document}